\documentclass[fleqn,10pt]{wlscirep}
\usepackage[utf8]{inputenc}
\usepackage[T1]{fontenc}
\usepackage{bm}
\usepackage[mathlines]{lineno} 
\usepackage{subcaption}
\usepackage{tabularx,booktabs,array}
\usepackage{amsthm}

\newtheorem{definition}{Definition}
\newtheorem{theorem}{Theorem}
\newtheorem{Lemma}{Lemma}
\newtheorem{proposition}{Proposition}

\title{Frequency-induced Tipping: Resonance-Driven Tipping in a Threshold Predator-Prey System}

\author[1,*]{Shan Gao}
\author[2]{Silas Poloni}
\author[1]{Hao Wang}
\author[3,4]{Mark Lewis}
\affil[1]{Department of Mathematical and Statistical Sciences, University of Alberta, Edmonton, Alberta, Canada}
\affil[2]{Department of Science and Environment, Roskilde University, Roskilde, Denmark}
\affil[3]{Department of Mathematics and Statistics, University of Victoria, Victoria, British Columbia, Canada}
\affil[4]{Department of Biology, University of Victoria, Victoria, British Columbia, Canada}

\affil[*]{e-mail: sgao9@ualberta.ca}

\begin{abstract}
Tipping points refer to abrupt, substantial, and often irreversible transitions in dynamic systems that can be triggered by minor perturbations. Existing tipping mechanisms are usually classified by the nature of the trigger: bifurcation-induced tipping occurs when a parameter crosses a critical threshold, noise-induced tipping is caused by random perturbations, rate-induced tipping occurs when a system fails to track a moving attractor, and phase-induced tipping is triggered by timed perturbations relative to an oscillatory state. In this paper, we identify a new mechanism for tipping: frequency-induced tipping, in which periodic forcing drives a transition through resonant amplification. This occurs by destabilization of an equilibrium via subthreshold perturbations of the appropriate frequency. Filippov analysis of a piecewise-smooth model identifies admissible crossings that lead either to predator loss or to extinction of predator and prey, and numerical simulations show that tipping persists after smoothing of the thresholds. These results extend the current tipping framework to include critical transitions under recurrent environmental forcing.
\end{abstract}
\begin{document}

\flushbottom
\maketitle

\thispagestyle{empty}

\section{Introduction}
Tipping, or a critical transition, refers to sudden, significant, and often difficult-to-reverse changes in the state of a dynamical system that can be triggered by small changes or perturbations~\cite{folke2004regime,scheffer2001catastrophic}. Such transitions are widely documented in ecological systems, where nonlinear feedbacks and alternative attractors can cause ecosystems to respond gradually over some ranges of environmental change but suddenly shift once resilience is sufficiently eroded~\cite{scheffer2001catastrophic}. Classic ecological tipping examples include eutrophication-driven transitions in shallow lakes from clear, macrophyte-dominated states to turbid, phytoplankton-dominated states; vegetation collapse and desertification in drylands; and coral reef phase shifts from coral-dominated to macroalgae-dominated communities~\cite{scheffer2001catastrophic,mumby2007thresholds,hughes2010rising,van2002spatial}. Understanding the dynamical mechanisms that generate such tipping phenomena is central to assessing resilience, predicting ecosystem collapse, and designing effective management strategies.

Although critical transitions, by themselves, may appear phenomenologically similar, the mechanisms that generate them can differ substantially~\cite{ashwin2012tipping, screen2010central}. In the context of mathematical theory, four major mechanistic scenarios have been proposed to explain critical transitions in dynamical systems. Previous studies have often focused on the sudden shifts that occur when gradual quantitative changes in system conditions (i.e. parameters) alter the underlying stability structure. This mechanism is referred to as bifurcation-induced tipping or B-tipping~\cite{scheffer2001catastrophic, scheffer2009early, kuehn2011mathematical}. A classic example is hysteresis, where the system shifts from a base attractor to an alternative stable state as an external condition gradually crosses a bifurcation threshold. Once this transition has occurred, simply reversing the external condition may not restore the original state; instead, a larger decrease or increase in the parameter is required to push the system back to its base attractor; see Figure~\ref{fig:Tipping}A.

Crossing a bifurcation threshold, however, is not necessary for tipping. For instance, stochastic disturbances can perturb the system state or its parameters, driving a trajectory across a basin boundary into the domain of attraction of an alternative regime. If such disturbances cause the system to deviate sufficiently toward a different basin of attraction, the system can shift to another attractor, as illustrated in Figure~\ref{fig:Tipping}B. This phenomenon is known as noise-induced tipping (N-tipping)~\cite{horsthemke1984noise, ditlevsen2010tipping}. In contrast to B-tipping, which requires sufficient qualitative changes, the rate of change in the parameter can also induce rapid shifts in system states, referred to as rate-induced tipping (R-tipping)~\cite{wieczorek2011excitability, wieczorek2023rate}. R-tipping occurs when the system fails to track a moving attractor under sufficiently rapid parameter change, often without crossing a bifurcation threshold (Figure~\ref{fig:Tipping}C). For systems exhibiting stable limit-cycles, phase-induced tipping (P-tipping) has been proposed as a means to destabilise the dynamics~\cite{alkhayuon2021phase, alkhayuon2023stochastic}. P-tipping occurs when a parameter shift displaces a system from a limit-cycle into an alternative basin of attraction during the vulnerable phase of the cycle, even though the shift does not exceed the bifurcation threshold (Figure~\ref{fig:Tipping}D). 

Tipping mechanisms described above distinguish several ways in which the temporal structure and stochasticity of external forcing can affect system stability. Yet they do not fully address a common feature of environmental variability: repeated forcing at a characteristic frequency. Many environmental drivers fluctuate periodically or quasi-periodically across a wide range of temporal scales~\cite{fine2015ecological}. At short timescales, diel cycles in light and temperature entrain circadian regulation and shape daily physiological and behavioral rhythms in many organisms~\cite{paranjpe2005evolution, cohen2015circadian}; intermediate-scale cycles include seasonal variation in temperature, precipitation, resource availability, and life-history timing~\cite{white2020seasonality}; and longer-timescale variability includes climate modes such as the El Ni\~{n}o-Southern Oscillation (ENSO), the North Atlantic Oscillation (NAO), and the Pacific Decadal Oscillation (PDO), whose periods can span years and influence ecological processes across broad spatial scales~\cite{stenseth2003studying}. These examples suggest that environmental change is often structured not only by its magnitude or direction, but also by its characteristic frequency.

Periodic forcing can alter both the occurrence and interpretation of tipping in non-autonomous systems~\cite{medeiros2017trapping,bathiany2018abrupt}. In such systems, tipping may involve transitions between time-dependent attracting responses rather than between static equilibria, and tipping occurrence can depend on the amplitude, period, frequency, and phase of the external forcing~\cite{alkhayuon2023stochastic}. Among the properties of periodicity, resonance provides a natural mechanism for this frequency dependence. For instance, periodic environmental variation can be amplified by nonlinear ecological responses, producing resonance-like population outbreaks~\cite{blarer1999resonance}, while stochastic resonance has been shown to arise near a critical transition in a lake-eutrophication model~\cite{ma2022relaxation}. Stochastic resonance has also been linked to increased phase-tipping risk in cyclic predator–prey systems under climate variability~\cite{alkhayuon2023stochastic}. These results imply that environmental fluctuations with the same mean and amplitude may have qualitatively different effects at different frequencies. This motivates the following hypothesis:
\begin{quote}
\itshape
Subthreshold forcing can induce tipping when applied at a resonant frequency.
\end{quote}
In this paper, we identify and characterize frequency-induced tipping (F-tipping): tipping occurs when external periodic forcing resonates with a system’s intrinsic dynamics and amplifies endogenous variation enough to drive the trajectory out of the basin of attraction of its base attractor, although that regime remains stable for every frozen forcing value within the forcing range. We compare F-tipping with other tipping mechanisms in Table~\ref{tab:tipping_mechanisms}. 

This perspective has direct implications for ecosystem management, especially in systems whose dynamics are governed by intrinsic timescales such as growth, mortality, dispersal, recovery, resource renewal, and species interactions. Management strategies often focus on reducing the magnitude of environmental stressors, slowing their rate of change, or preventing extreme perturbations~\cite{folke2004regime,scheffer2009early,ashwin2012tipping,wieczorek2023rate}. F-tipping suggests an additional control dimension: the recurrence frequency of environmental variation. Identifying frequencies that resonate with intrinsic ecological timescales could help managers avoid destabilizing intervention schedules, anticipate periods of heightened vulnerability, and design restoration or disturbance regimes that minimize resonant amplification. Preventing or promoting transitions may require not only asking how much, how fast, or when the environment changes, but also how often it changes.

\begin{figure}[ht]
    \centering
    \includegraphics[width=\linewidth]{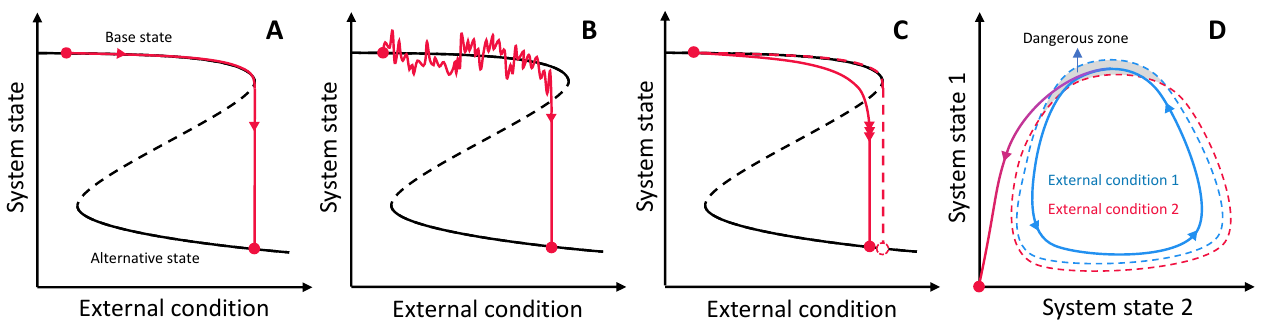}
    \caption{Schematic illustration of four tipping mechanisms. (A) Bifurcation-induced tipping occurs when external changes push the system across a critical threshold. (B) Noise-induced tipping arises when stochastic fluctuations act directly on the system's state, pushing it past a critical threshold and triggering a shift between stable states, even when external conditions remain constant. (C) Rate-induced tipping can be triggered when rapidly changing external forcing deflects the system into an alternative stable state. (D) Phase-induced tipping occurs in oscillatory dynamical systems when small perturbations in external conditions affect certain phases (or timing). Dashed curves indicate the basin boundaries. If the external condition changes while the system is in the "dangerous zone", the original oscillation can no longer be maintained.}
    \label{fig:Tipping}
\end{figure}

\begin{table}[htbp]
\centering
\small
\renewcommand{\arraystretch}{1.2}
\begin{tabularx}{\textwidth}{
>{\raggedright\arraybackslash}p{0.10\textwidth}
>{\raggedright\arraybackslash}p{0.35\textwidth}
>{\raggedright\arraybackslash}X}
\toprule

\multicolumn{3}{p{\dimexpr\textwidth-2\tabcolsep\relax}}{
\textbf{Proposed F-tipping:}
Periodic forcing drives a transition through resonant
amplification of the system's response.
} \\

\midrule
\textbf{Existing mechanism}
& \textbf{Primary trigger}
& \textbf{Distinction from F-tipping} \\
\midrule

B-tipping
& A slowly varying parameter crosses a bifurcation threshold at which
the original attractor loses stability or disappears.
& F-tipping can occur even when the periodic forcing remains within bounds for which the system does not tip under fixed parameter values. \\

N-tipping
& Random fluctuations push the state or parameters across a basin boundary or critical threshold. 
& F-tipping is driven by deterministic periodic forcing rather than stochastic perturbations. The transition depends on forcing frequency, not noise amplitude or rare random events. \\

R-tipping
& Non-autonomous instability in which the system loses the ability to track a changing base attractor, without crossing a frozen-system bifurcation.
& F-tipping emphasizes resonant amplification at selected
forcing frequencies, rather than tracking failure alone. \\

P-tipping
& Perturbations applied at vulnerable phases of an existing oscillation drive the system away from its original attractor. 
& F-tipping does not require a pre-existing stable limit cycle or perturbation at a dangerous phase. Its defining
feature is frequency-selective amplification, although
the external forcing phase can also affect the outcome. \\

\bottomrule
\end{tabularx}
\caption{Comparison of commonly discussed tipping mechanisms
with the proposed frequency-induced tipping (F-tipping),
emphasizing their primary triggers and distinctions.}
\label{tab:tipping_mechanisms}
\end{table}

\section{Results}
\subsection{Filippov predator-prey model}
We first introduce the autonomous predator-prey system that provides the unforced baseline for the frequency-induced tipping analysis:
\begin{equation}
\begin{pmatrix}
\dfrac{dN}{dt} \\[6pt]
\dfrac{dP}{dt}
\end{pmatrix}=
\begin{pmatrix}
rN\left(1-\dfrac{N}{K}\right)\theta(N-N_a)-rN\theta(N_a-N)-aNP\theta(P-P_a) \\[6pt]
baNP\theta(P-P_a)-mP
\end{pmatrix},
\label{predator_prey_model}
\end{equation}
with the Heaviside step function:
\begin{equation}
\theta(x)=
\begin{cases}
1, & x>0,\\
0, & x\le 0.
\end{cases}
\label{step_fun}
\end{equation}
In this model, $N(t)$ and $P(t)$ denote the prey and predator populations at time $t$. The prey dynamics are governed by their intrinsic growth rate $r$, carrying capacity $K$, and Allee effect threshold $N_a$. For predators, parameters $a$, $b$, $m$, and $P_a$ denote the predator attack rate, conversion efficiency, natural mortality rate, and the predator Allee effect threshold, respectively.

The Heaviside functions $\theta(\cdot)$ introduce abundance-dependent switches in prey growth and predation, representing idealized Allee effects in both populations. For prey, the intrinsic population dynamics follow logistic growth when $N>N_a$ and decline exponentially otherwise. This switch captures demographic disadvantages at low abundance, such as mate limitation, reduced group vigilance, and weaker collective defence~\cite{gascoigne2009dangerously, mooring2004vigilance}. $N_a$ therefore represents the threshold below which the prey population cannot sustain itself even in the absence of predation. For predators, $P_a$ represents the minimum abundance required for effective predation, as might arise from dependence on cooperative hunting. When $P\leq P_a$, predation is inactive and predators decline solely through natural mortality. When $P>P_a$, predation removes prey at rate $aNP$ and contributes to predator reproduction at rate $baNP$. These switches make system~\eqref{predator_prey_model} piecewise-smooth, with discontinuities along $N=N_a$ and $P=P_a$, and motivate its treatment within the Filippov framework~\cite{bernardo2008piecewise}.

Assuming $N_a>0$ and $P_a>0$, these thresholds partition the nonnegative phase plane $\mathbb{R}_+^2:=\{(x,y):x\geq0, y\geq0\}$ into four distinct regions $R_i$, $i\in\{1,2,3,4\}$, as shown in Figure~\ref{fig:figure2}A. Within each region, system~\eqref{predator_prey_model} is governed by a smooth vector field $f_i$. The boundaries separating adjacent regions are denoted by $\Sigma_{(i,j)}$, with $(i,j)\in\{(1,2),(2,3),(3,4),(4,1)\}$. The vector field therefore switches whenever a trajectory intersects one of these boundaries. The full subsystem formulation and formal Filippov definitions are detailed in~\nameref{sec:method}. 

Unless otherwise specified, we use the baseline parameters in~\eqref{eqn:parameters}. The unforced system~\eqref{predator_prey_model} has three biologically relevant candidate equilibria:
\begin{equation}\label{eqn:equilibria}
    E_0=(0,0), \qquad 
    E_1=(K,0), \qquad
    E_2= (N^*,P^*) = \left(\frac{m}{ab},\frac{r}{a}\left(1-\frac{m}{baK}\right)\right),
\end{equation}
corresponding to extinction, prey-only persistence, and predator-prey coexistence, respectively. Note that the coexistence equilibrium $E_2$ is always regular ($N^*>N_a$ and $P^*>P_a$, see~Definition\ref{Def:regular}) under our parameter choices. We numerically computed basins of attraction, as shown in Figure~\ref{fig:figure2}B, with representative simulations illustrated in Figure~\ref{fig:figure2}C-E. The basins of attraction show that if the trajectory crosses the prey threshold $N_a$, the dynamics can lead to extinction $E_0$; if it crosses the predator threshold $P_a$ while prey remain above its Allee threshold, the dynamics can lead to prey-only persistence $E_1$. Stability analyses are provided in the Supplementary Material~\ref{S:subsystems_dynamics}. Specifically, we compute the eigenvalues of the Jacobian matrix of subsystem $f_1$ evaluated at $E_2$ 
\begin{equation}
\lambda_{\pm} = -\frac{rm}{2abK} \pm i\omega, \qquad \text{where }\omega = \sqrt{mr\left(1-\frac{m}{baK}\right)-\frac{r^2m^2}{4a^2b^2K^2}}.
\end{equation}
Since $\Re(\lambda_{\pm}) < 0$ and $\Im(\lambda_{\pm}) \neq 0$ for appropriate parameter choices (see Supplemental Material \ref{sec: app-resc}), the coexistence equilibrium $E_2 = (N^*,P^*)$ is a stable focus. The natural period of the associated damped oscillations is $\tau_n=\frac{2\pi}{\omega}$. This intrinsic period motivates the mechanism studied below: periodic forcing of the prey growth rate can resonate with the damped predator-prey oscillations when the forcing period is close to $\tau_n$, amplifying the trajectory such that a switching boundary is reached, triggering a critical transition.

\begin{figure}
    \centering
    \includegraphics[width=\linewidth]{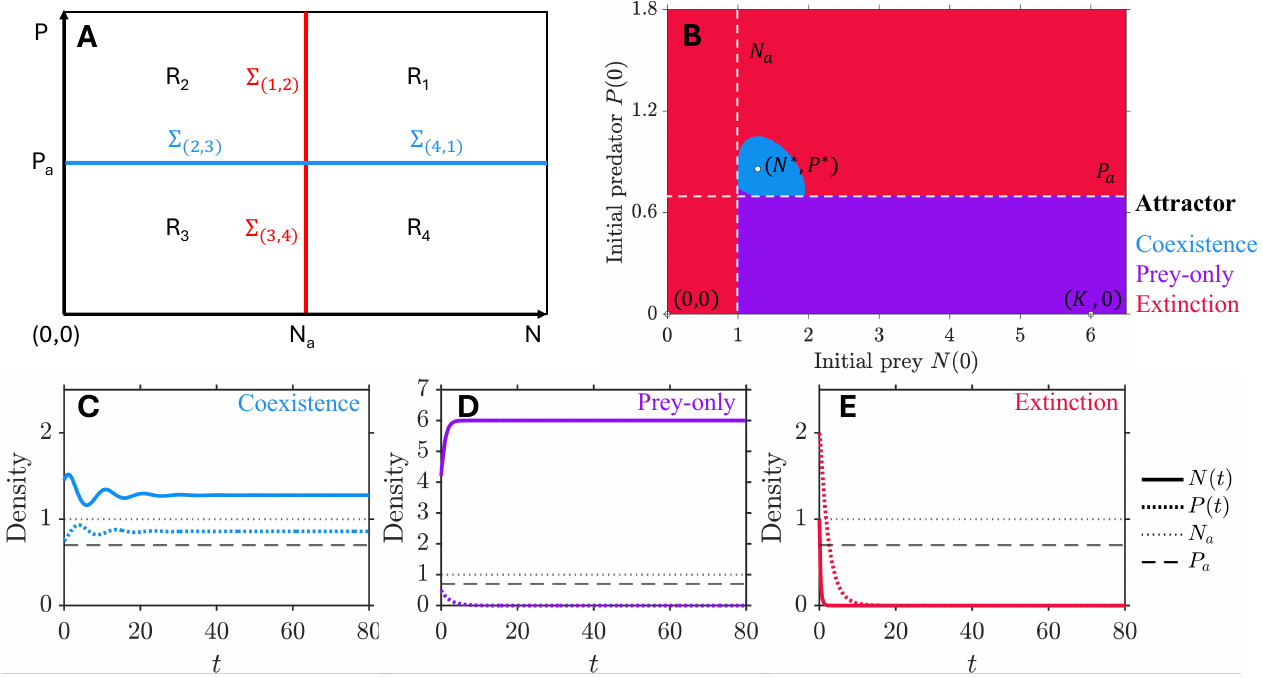}
    \caption{Baseline dynamics of the autonomous threshold predator-prey system. (A) Schematic diagram for the Allee threshold of system~\ref{predator_prey_model}. (B) Basins of attraction for the Filippov system~\ref{predator_prey_model} with Allee thresholds (in white dashed lines) $N_a=1$ and $P_a=0.7$. Initial conditions may approach extinction $E_0=(0,0)$, prey-only $E_1=(K,0)$, or coexistence $E_2=(N^*,P^*)$. Parameters used are $r=1.2$, $K=6$, $a=1.1$, $b=0.32$, and $m=0.45$. (C-E) Representative simulations for the three outcomes.}
    \label{fig:figure2}
\end{figure}

\subsection{Periodic forcing induces frequency-selective transition}
Environmental variation, including seasonal changes, can cause temporal variation in demographic rates~\cite{fretwell1972populations}, which is commonly represented using sinusoidal forcing~\cite{cushing1977periodic}. We model the periodically varying intrinsic growth rate as
\begin{equation}
    \rho(t)=r_0+A\sin\left(\frac{2\pi t}{\tau}+\phi\right)\in[r_0-A,r_0+A],
    \label{varying_rho}
\end{equation}
where $r_0$ is the mean growth rate, $A$ is the forcing amplitude, $\tau$ is the forcing period, and $\phi$ is the initial phase. Here $0\leq A<r_0$, ensuring that $\rho(t)$ stays positive. Replacing the constant growth rate $r$ in system~\eqref{predator_prey_model} with $\rho(t)$ gives
\begin{equation}
\begin{pmatrix}
\dfrac{dN}{dt} \\[6pt]
\dfrac{dP}{dt}
\end{pmatrix}
=
\begin{pmatrix}
\rho(t)N\left(1-\dfrac{N}{K}\right)\theta(N-N_a)
-\rho(t)N\theta(N_a-N)
-aNP\theta(P-P_a)
\\[8pt]
baNP\theta(P-P_a)-mP
\end{pmatrix}.
\label{predator_prey_model_varying_r}
\end{equation}
Here we choose $r_0=1.2$ and $\phi=0$, and initialise the system at the coexistence equilibrium $E_2(r_0)$. For these parameters, the frozen system admits a coexistence equilibrium for $r>r_c\approx0.978484$ (Methods~\ref{Bifurcation_diagram}), and no frozen-system bifurcation threshold was crossed since $\min_t\rho(t)=r_0-A>r_c$.

To visualize the resonant response before accounting
for threshold switching, we simulate the smooth subsystem $f_1$ in $R_1$ with $\rho(t)$. For each fixed forcing amplitude $A\in\{0,0.05,0.10,0.15\}$, we vary the forcing period over $\tau\in[0.05,20]$ and record the historical minimum and maximum populations over $t\in[0,250]$ (Figure~\ref{fig:figure3}A,B). All amplitudes satisfied the above condition, with the largest amplitude giving $\min_t\rho(t)=1.05>r_c$. Results show pronounced amplification for forcing periods $\tau$ near the natural period of the unforced coexistence equilibrium $\tau_n=\frac{2\pi}{\omega}$. Increasing $A$ raises the historical maximum and lowers the historical minimum, widening the range of forcing periods over which the prey or predator minimum fell below its corresponding threshold $N_a$ or $P_a$. In the full piecewise system, such crossings activate different dynamical regimes and can lead to population collapse.

The full-system simulations illustrate how the forcing period determines the outcome at a fixed amplitude $A=0.10$ (Figure~\ref{fig:figure3}C-G). The constant-growth controls $\rho\equiv r_0+A$ and $\rho\equiv r_0-A$ both exhibit coexistence (Figure~\ref{fig:figure3}C,D). Periodic forcing with $\tau=7.5$, which is far away from the natural period, also maintains coexistence, with both populations oscillating above their respective thresholds (Figure~\ref{fig:figure3}E). Increasing the period to $\tau=8.0$ drives the prey population below $N_a$, followed by the collapse of both populations (Figure~\ref{fig:figure3}F). By contrast, at $\tau=11.4$, the system converges to the prey-only equilibrium (Figure~\ref{fig:figure3}G). Thus, periodic variation within the same growth-rate interval that does not exceed the bifurcation threshold, produce coexistence, extinction, or prey-only equilibrium, depending on the forcing period. The amplification near $\tau_n$ and the threshold crossings in the full system support a mechanism of frequency-selective collapse, where forcing near the intrinsic oscillation timescale amplifies population fluctuations sufficiently to trigger critical transitions in the dynamics.

\begin{figure}[ht]
    \centering
    \includegraphics[width=\linewidth]{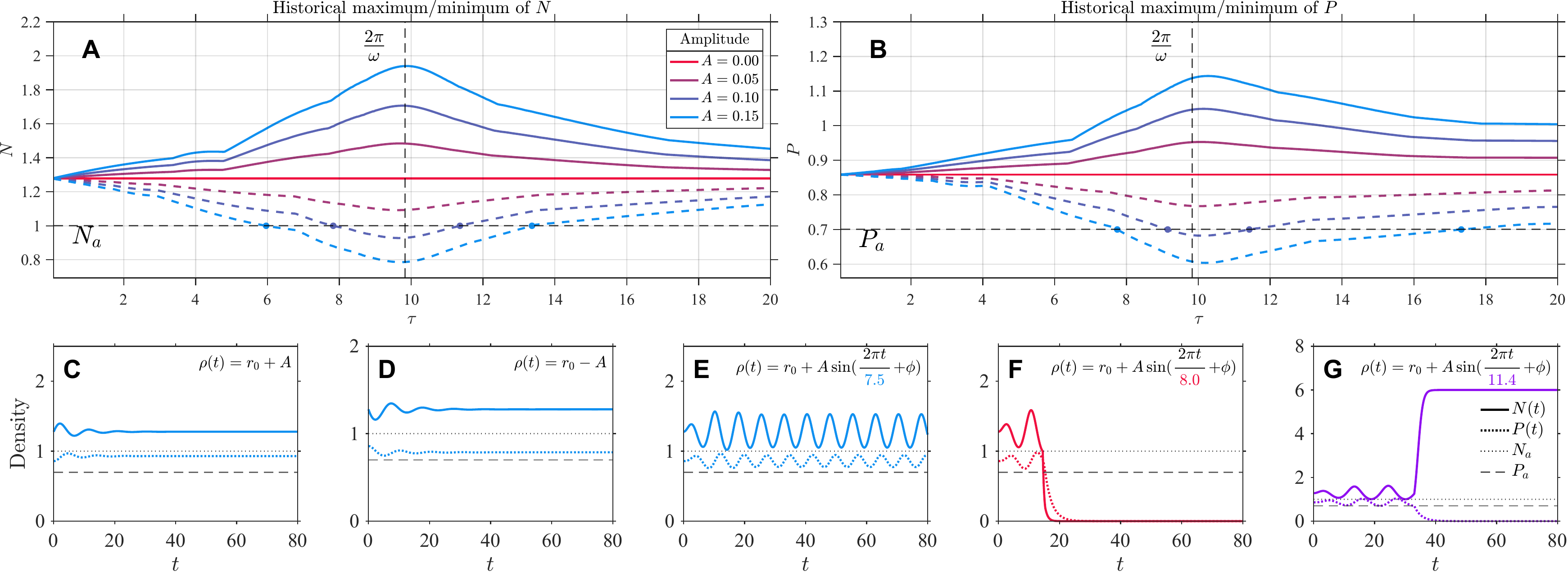}
    \caption{Resonant amplification and frequency-selective dynamics of system~\eqref{predator_prey_model_varying_r} under periodic prey-growth forcing $\rho(t)$. (A,B) Historical maximum (solid curves) and minimum (dashed curves) of prey density $N$ and predator density $P$, respectively, over $t\in[0,250]$, obtained by continuing the smooth subsystem $f_1$ without threshold switching under periodic forcing. Colours indicate the forcing amplitude $A\in\{0,0.05,0.10,0.15\}$. Horizontal dashed lines mark the thresholds $N_a$ and $P_a$, and the vertical dashed line marks the intrinsic period $\frac{2\pi}{\omega}\approx 9.83$. (C-G) Time series of the full piecewise system~\eqref{predator_prey_model_varying_r} over $t\in[0,80]$, with $A=0.10$, $r_0=1.2$, and $\phi=0$. (C,D) Constant-growth controls with $\rho=r_0+A$ and $\rho=r_0-A$, respectively. (E-G) Periodic forcing with $\tau=7.5$, $8.0$, and $11.4$, respectively. Blue indicates coexistence, red indicates extinction, and purple indicates prey-only.}
    \label{fig:figure3}
\end{figure}

\subsection{Threshold crossings determine the tipping outcome}
Periodic forcing can amplify population oscillations sufficiently to reach an Allee threshold when we only consider the subsystem $f_1$. The resulting transition, however, depends on which threshold the trajectory crosses first. Crossing the prey threshold can lead to extinction of both populations, whereas crossing the predator threshold while prey remain above their Allee threshold can lead to prey-only persistence. We now identify the crossing conditions that distinguish these outcomes.

We conduct the Filippov analysis for the autonomous system~\eqref{predator_prey_model} and show that all nonempty sliding segments of the autonomous system are escaping (see~\nameref{sec:method}), where adjacent vector fields point away from the switching boundary. The explicit ranges of the escaping sliding segments are shown in~\eqref{eq:sigma-e}. Specifically, the escaping segment on $\Sigma_{(4,1)}$ is independent of $r$, whereas the escaping segment on $\Sigma_{(1,2)}$ depends on $r$. The trajectory cannot cross these escaping segments but can cross through an admissible crossing segment when one exists. When a trajectory reaches a switching manifold $\Sigma_{(i,j)}$ through a nonsliding portion, it crosses into the adjacent smooth region according to the local vector field (see Figure~\ref{fig4}D and Figure~\ref{fig:FigureS1}B). If the reaching point lies on an escaping sliding segment, the adjacent vector fields point away from the switching manifold. Pseudo-equilibria (equilibria on the sliding segment but are not equilibria of any of the vector fields of the original system~\cite{bernardo2008piecewise}) on escaping segments are not attractors of the full Filippov dynamics (see Proposition~\ref{Pseudo}). With the Filippov analysis, we can provide a criterion of Frequency-induced tipping.
\begin{theorem}[Frequency-induced tipping criterion]
Assume that $\rho(t)>0$ for all $t$, and suppose that the initial condition $(N_0,P_0)\in R_1$. Define the values (intervals) of forcing periods $\tau$ for which the historical minimum of $N$ and $P$ fall below the Allee thresholds $N_a$ and $P_a$
\begin{equation*}
\mathcal{T}_N(A) = \left\{
\tau>0:\min_{t\in[0,\infty)}N_\tau(t)\leq N_a\right\},
\qquad
\mathcal{T}_P(A) = \left\{
\tau>0:\min_{t\in[0,\infty)}P_\tau(t)\leq P_a\right\}.
\end{equation*}
A necessary condition for frequency-induced tipping is therefore
\begin{equation*}
    \exists\tau_{\mathrm{tip}}\in \mathcal{T}_N(A)\cup\mathcal{T}_P(A).
\end{equation*}
Define the first admissible crossing time
\begin{equation}
t_c=
\inf\left\{
t>0:
\begin{array}{l}
N(t)=N_a
\ \text{and}\
P(t)>\dfrac{\rho(t)}{a}\left(1-\dfrac{N_a}{K}\right), \quad \text{or} \quad P(t)=P_a
\ \text{and}\
N_a<N(t)<\dfrac{m}{ba}
\end{array}
\right\}.
\end{equation}
If $t_c<\infty$, then frequency-induced tipping occurs. Moreover, the tipping outcome is determined by the first admissible threshold crossing:
\begin{equation}\label{criterion}
    \begin{cases}
    N(t_c)=N_a,\quad P(t_c)>\dfrac{\rho(t_c)}{a}\left(1-\dfrac{N_a}{K}\right),
    & \text{then the trajectory crosses from $R_1$ into $R_2$ and tips toward $E_0$,}\\[8pt]
    P(t_c)=P_a,\quad N_a<N(t_c)<\dfrac{m}{ba},
    & \text{then the trajectory crosses from $R_1$ into $R_4$ and tips toward $E_1$.}
\end{cases}
\end{equation}
\end{theorem}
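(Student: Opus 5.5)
The argument is local Filippov analysis on the two switching manifolds bounding $R_1$. Before any threshold is reached, the trajectory stays in $R_1$. At the first time it reaches $\partial R_1$, the sign of the normal component of each adjacent vector field decides whether it crosses and into which region.

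\textbf{Necessity.} Suppose tipping occurs, meaning the trajectory started in $R_1$ fails to remain in the basin of $E_2$. The trajectory of the forced system stays in the open region $R_1$, where it follows the smooth field $f_1$ with $\rho(t)$, until it first meets $\Sigma_{(1,2)}\cup\Sigma_{(4,1)}$. Leaving the coexistence regime therefore requires reaching $\{N=N_a\}$ or $\{P=P_a\}$, so $\min_t N_\tau\le N_a$ or $\min_t P_\tau\le P_a$. Hence $\tau\in\mathcal{T}_N(A)\cup\mathcal{T}_P(A)$. Positivity of $\rho$ and of $N,P$ ensures that $R_1$ is the only relevant region initially.

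\textbf{Sufficiency and classification.} Let $t_c<\infty$. By minimality of $t_c$, the trajectory lies in $\overline{R_1}$ on $[0,t_c]$. I would compute normal components at the hitting point.

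On $N=N_a$ with $P>P_a$:
\begin{itemize}
\item The field $f_1$ has $\dot N=N_a\bigl(\rho(1-N_a/K)-aP\bigr)$, which is negative exactly when $P>\frac{\rho(t)}{a}(1-N_a/K)$.
\item The field $f_2$ has $\dot N=-\rho N_a-aN_aP<0$ always.
\end{itemize}
Both fields point into $R_2$, so this is a crossing point and not the escaping segment of~\eqref{eq:sigma-e}. The trajectory enters $R_2$ transversally.

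On $P=P_a$ with $N>N_a$:
\begin{itemize}
\item The field $f_1$ has $\dot P=P_a(baN-m)$, which is negative iff $N<m/(ba)$.
\item The field $f_4$ has $\dot P=-mP_a<0$.
\end{itemize}
Both fields point into $R_4$, so the trajectory crosses into $R_4$.

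The corner $(N_a,P_a)$ is excluded by the strict inequalities. If both conditions hold at the same time, the first one listed in the definition of $t_c$ applies; I expect this case to be nongeneric and would note it as such. Boundary points that fail the inequalities are tangencies or escaping sliding points. By Proposition~\ref{Pseudo}, pseudo-equilibria there are not attractors, so such points do not stop the dynamics. This justifies calling $t_c$ the first \emph{admissible} crossing.

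\textbf{Outcome after crossing.} In $R_4$ ($N>N_a$, $P\le P_a$) predation is off. Then $\dot P=-mP$ keeps $P$ decaying and the trajectory stays in $R_4$, while $\dot N=\rho N(1-N/K)$ with $\rho>0$ keeps $N>N_a$ and drives $N\to K$. So the trajectory tends to $E_1$.

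In $R_2$ ($N<N_a$, $P>P_a$) we have $\dot N<0$, so $N$ stays below $N_a$. I would then split into cases:
\begin{itemize}
\item If $P$ stays above $P_a$, then $N$ decays exponentially, so $baNP-mP<0$ eventually, and $P$ falls to $P_a$.
\item Once $P\le P_a$, the trajectory is in $R_3$, where $\dot N=-\rho N$ and $\dot P=-mP$ give decay to $E_0$ while staying in $R_3$.
\end{itemize}
A possible return across $\Sigma_{(2,3)}$ is ruled out because $\dot P=-mP_a<0$ on the $R_3$ side, and $\dot P<0$ on the $R_2$ side once $N<m/(ba)$. The latter holds because $N_a<m/(ba)$ under the baseline parameters.

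\textbf{Main obstacle.} The hardest step is ruling out a return to $R_1$ from $R_2$ while the non-autonomous $\rho(t)$ varies. I would handle it with the monotonicity arguments above, which hold for every $t$ since $\rho>0$. The sign bounds are uniform in $\rho$, so time dependence does not break the invariance of $R_2\cup R_3$ and of $R_4$.
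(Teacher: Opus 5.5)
Your dynamical argument matches the paper's. Necessity holds because the full system coincides with the $f_1$ flow until the first contact with $\partial R_1$. Classification uses the signs of the normal components of $f_1,f_2$ on $N=N_a$, with the time-dependent bound $\rho(t)(1-N_a/K)/a$, and of $f_1,f_4$ on $P=P_a$.

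What your proof does not establish is the word \emph{frequency-induced} in the conclusion. With only $(N_0,P_0)\in R_1$ assumed, everything you wrote goes through with $A=0$. Take $N_0$ slightly above $N_a$ and $P_0>r_0(1-N_a/K)/a$. The unforced trajectory then crosses $\Sigma_{(1,2)}$ admissibly and collapses to $E_0$, so $t_c<\infty$ although no forcing is involved.

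The paper excludes this with an explicit baseline hypothesis in its Methods. The unforced solution from the same initial condition must have no admissible crossing, $t_c^0=\infty$; this is automatic for the initial condition $E_2(r_0)$ used there. In addition, $\min_t\rho(t)=r_0-A>r_c$, so the coexistence equilibrium stays admissible for every frozen value of $\rho$. Only with these in place does your argument show that a transition absent without forcing is produced by the forcing. You should state both hypotheses and invoke them at the end.

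Otherwise the proof is sound, and your post-crossing step is more careful than the paper's. The paper appeals to the global-stability results for $f_2,f_3,f_4$, which are proved for constant $r$. Your monotonicity bounds are uniform in $\rho(t)>0$ and explicitly exclude re-entry into $R_1$, using $N_a<m/(ba)$ and $N_a<K$, both true for the baseline parameters.

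Two smaller points:
\begin{itemize}
\item The appeal to pseudo-equilibria is unnecessary. A trajectory arriving from $R_1$ cannot reach an interior point of an escaping segment at all, because $f_1$ points strictly into $R_1$ there. The only non-crossing first contacts are therefore tangencies.
\item The two alternatives in $t_c$ can never hold at once, since the second requires $N>N_a$ strictly. The ``both conditions'' case is impossible, not merely nongeneric.
\end{itemize}
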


As an example, here we choose $A=0.12$ for a more obvious difference between intervals $\mathcal{T}_N(A)$ and $\mathcal{T}_P(A)$. We numerically compute the intervals $\mathcal{T}_N(A=0.12)=[7.616540, 12.051099]$ and $\mathcal{T}_P(A=0.12)=[8.602175, 12.726177]$. This yields four possible regimes:
\begin{enumerate}[label=\textbf{Case \arabic*:},ref=Case~\arabic*,labelindent=\parindent,leftmargin=*,itemsep=0.4em]
\item \label{case1}
$\tau \notin \mathcal{T}_N(A) \cup \mathcal{T}_P(A)$: the system maintains $E_2$ and does not tip to either $E_0$ or $E_1$ (See Figure~\ref{fig4}B);
\item \label{case2}
$\tau \in \mathcal{T}_N(A)$ but $\tau \notin \mathcal{T}_P(A)$: the system can tip from $E_2$ to $E_0$ (See Figure~\ref{fig4}C);
\item \label{case3}
$\tau \in \mathcal{T}_N(A) \cap \mathcal{T}_P(A)$: the system tips from $E_2$ to either $E_0$ or $E_1$, as determined by~\eqref{criterion};
\item \label{case4}
$\tau \notin \mathcal{T}_N(A)$ but $\tau \in \mathcal{T}_P(A)$: the system tips from $E_2$ to $E_1$ (See Figure~\ref{fig4}D).
\end{enumerate}

\begin{figure}[ht]
    \centering
    \includegraphics[width=\linewidth]{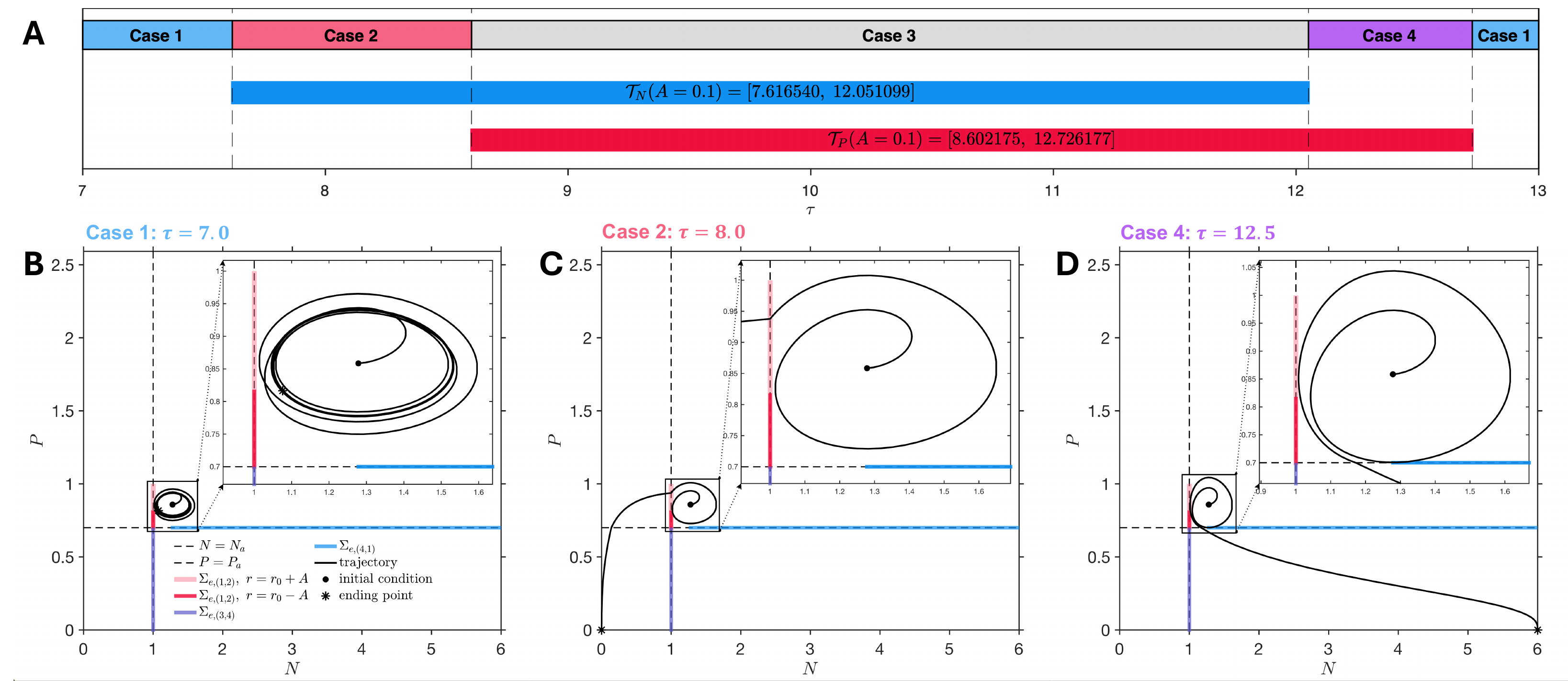}
    \caption{Threshold accessibility and tipping outcomes under periodic forcing. (A) Numerically estimated period sets $\mathcal{T}_N$ and $\mathcal{T}_P$ for $A=0.12$, partitioned into four cases according to which thresholds the unconstrained response reaches. (B-D) Representative phase portraits for $\tau=7.0$, $8.0$, and $12.5$, illustrating coexistence, total extinction, and prey-only persistence, respectively. Insets show the initial excursions near coexistence and the switching boundaries. Coloured boundary segments denote escaping sliding regions; the two red segments correspond to the frozen values $r_0\pm A$. Parameters are $r_0=1.2$, $K=6$, $a=1.1$, $b=0.32$, $m=0.45$, $N_a=1$, $P_a=0.7$, and $\phi=0$.}
    \label{fig4}
\end{figure}

For the parameter values in~\eqref{eqn:parameters}, coexistence is an admissible equilibrium when $r>r_c\approx0.978484$ (see Equation~\eqref{eq:critical_r_c}). Thus $r_0=1.2>r_c$ and take the initial condition near the coexistence equilibrium $E_2(r_0))$. For the frozen system~\eqref{predator_prey_model}, if the growth rate is held constant at $r=r_0\pm A<r_c$, the trajectory remains in the basin of attraction of $E_2$ as shown in Figure~\ref{fig:figure3}C,D.

In summary, frequency-induced tipping requires three ingredients: the unforced trajectory must remain attracted to the coexistence state, periodic forcing must amplify the response enough to reach an Allee threshold, and the threshold encounter must occur on an admissible crossing portion of the switching boundary rather than on an escaping sliding segment. The following theorem formalizes these requirements.

\subsection{Frequency, phase, and amplitude shape the tipping outcome}
Although we focus on the period $\tau$ of the varying growth rate $\rho(t)$, the other forcing parameters, amplitude $A$ and phase $\phi$, can also play a role in the tipping phenomenon. Figure~\ref{fig:figure5} illustrates the long-term outcome for different combinations of these parameters in $\rho(t)$. In each panel, the amplitude $A\in\{0.08,0.09,0.10,0.11,0.12,0.13,0.14,0.15\}$ is fixed, no bifurcation threshold is exceeded, the radial coordinate represents the forcing period $\tau\in[6,14]$, and the angular coordinate represents the phase $\phi\in[0,2\pi]$. The colours indicate whether the trajectory remains near coexistence $E_2$; tips to the prey-only equilibrium $E_1$; or tips to the extinction equilibrium $E_0$.

As the amplitude $A$ increases, the blue coexistence region shrinks, while the transition regions expand. This agrees with the expansion of the critical interval $\mathcal{T}_N(A) \cup \mathcal{T}_P(A)$: larger forcing amplitudes make tipping possible over a wider range of periods. These results also suggest that stronger periodic fluctuations in $\rho(t)$ can promote predator collapse and, in some cases, complete extinction, even though the mean forcing $r_0$ remains unchanged.

The phase $\phi$ further determines which transition occurs within the critical period ranges. One can consider changing the initial value of $\rho(t)$ with different $\phi$. When $\tau$ falls outside the transition intervals, as in \ref{case1}, the trajectory remains near coexistence for all phases. However, when $\tau$ satisfies one of \ref{case2}-\ref{case4}, changing $\phi$ shifts the timing of the forcing relative to the system state: if the trajectory crosses $\Sigma_{e,(1,2)}$ first, the system will tip to $E_0$; if the trajectory crosses $\Sigma_{e,(4,1)}$ first, the system will tip to $E_1$. Thus, while $\tau$ determines the degree of resonant amplification, $A$ controls the size of the parameter region where tipping is possible, and $\phi$ influences the direction of tipping once the system is in a critical period range.

\begin{figure}[ht]
    \centering
    \includegraphics[width=\linewidth]{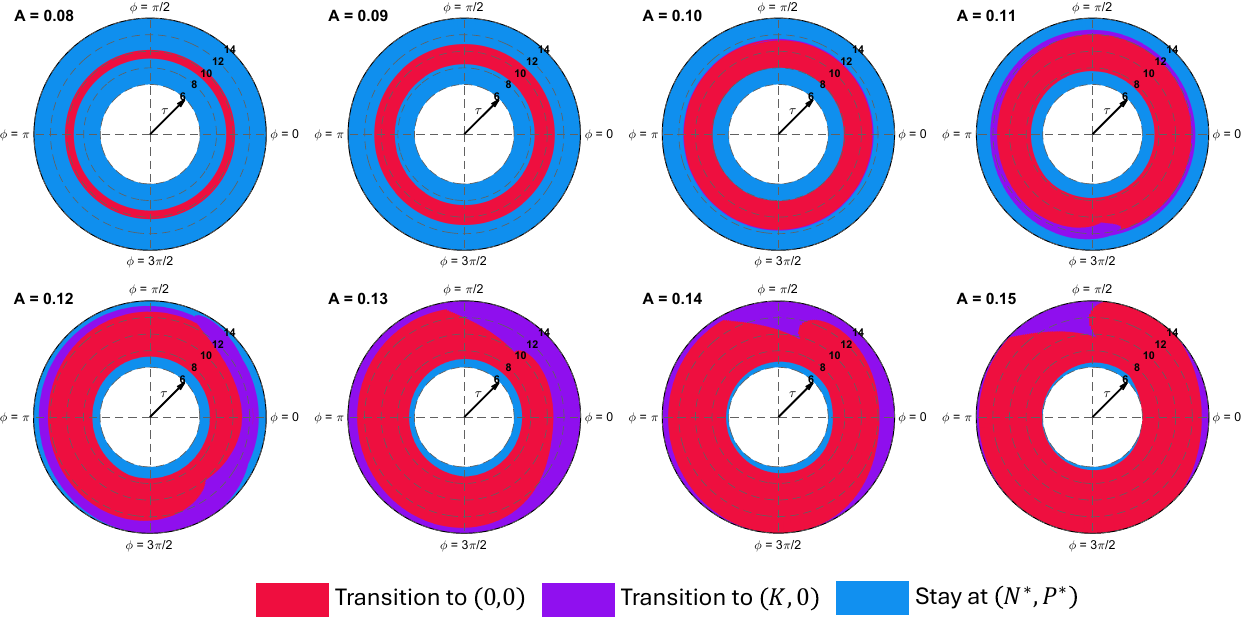}
    \caption{Critical transitions in system~\eqref{predator_prey_model_varying_r} from coexistence $(N^*,P^*)$ to either a prey-only ($(K,0)$, purple) or extinction ($(0,0)$, red), as well as cases with no transition, where coexistence persists (blue). Here, the intrinsic growth rate varies periodically: $\rho(t) = r_0 + A\sin(2 \pi t/\tau + \phi)$. Panels correspond to different values of the forcing amplitude $A$. The angular position around each circle represents the phase $\phi \in [0,2\pi]$, while the distance from the center represents the period $\tau \in [6,14]$. Parameter values are $r_0 = 1.2$, $P_a = 0.7$, $N_a = 1$, $K = 6$, $a = 1.1$, $b = 0.32$, and $m = 0.45$, yielding $\tau_n \approx9.83$.}
    \label{fig:figure5}
\end{figure}

\subsection{Frequency-selective tipping persists with smooth population responses}
We show that frequency-induced tipping can occur in the threshold predator-prey system with discontinuous switching boundaries. To examine whether this phenomenon persists after smoothing the discontinuities, we replace the Heaviside step function in~\eqref{step_fun} with the logistic function
\begin{equation}
    \gamma(x)=\frac{e^{\alpha x}}{1+e^{\alpha x}}=1-\gamma(-x),
    \label{logistic_fun}
\end{equation}
which provides a smooth approximation of the switching process~\cite{boukal2002single}. The parameter $\alpha>0$ controls the sharpness of the transition: as $\alpha\to\infty$, $\gamma(x)$ approaches the Heaviside function for $x\neq 0$.

For a fixed prey growth rate $\rho(t)\equiv r$, the smooth system shows the biologically relevant extinction, prey-only, and coexistence states (see Supplementary~\ref{continuous_approximation_analysis}). In contrast to the discontinuous system, the precise locations of the equilibria depend on $\alpha$ and are determined numerically. For the chosen parameter set~\eqref{parameter_continuous}, the stable coexistence equilibrium is a focus, with a local natural period of approximately $9.08$ (Supplementary~\ref{continuous_approximation_analysis}). Figure~\ref{Figure6} shows the simulated trajectories together with the corresponding frozen-system nullclines and equilibria. Filled circles indicate stable equilibria, whereas hollow circles indicate unstable or saddle equilibria.

In both constant environments, $\rho(t)=r_0+A$ and $\rho(t)=r_0-A$, the coexistence equilibrium remains locally asymptotically stable, and trajectories from the same initial condition converge to the respective coexistence equilibria (Figure~\ref{Figure6}A,B). However, under periodic forcing~\eqref{varying_rho}, the long-term outcome depends strongly on the forcing period $\tau$. For $\tau=9.0$, close to the local natural period of the unforced system, the trajectory leaves the coexistence region and approaches extinction of both populations (Figure~\ref{Figure6}D). For $\tau=12.3$, the predator population instead approaches extinction while the prey population approaches the carrying capacity $K$ (Figure~\ref{Figure6}E). These simulations demonstrate that frequency-induced tipping persists in the smooth approximation, despite the absence of discontinuous switching boundaries.

\begin{figure}[ht]
    \centering
    \includegraphics[width=\linewidth]{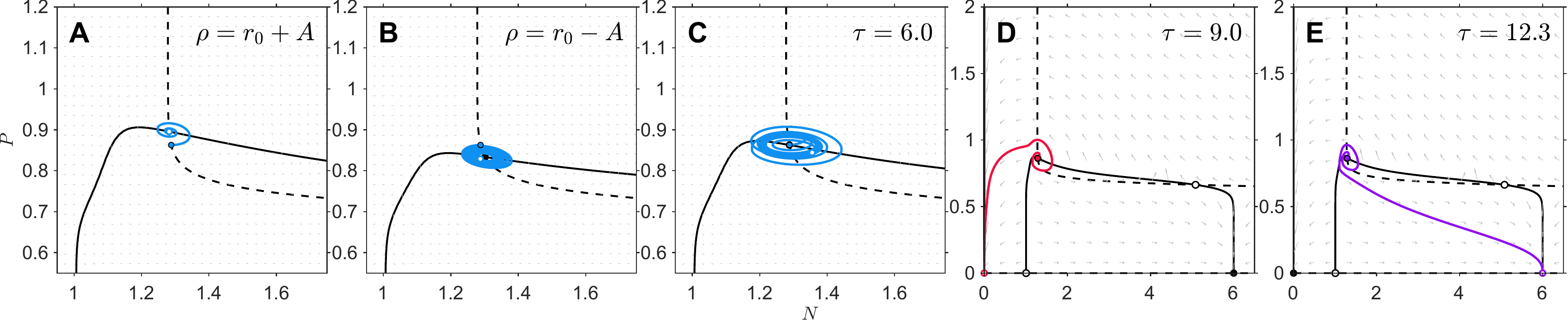}
    \caption{Phase-plane trajectories of the smoothed threshold predator--prey system under five choices of the prey growth rate $\rho(t)$. Solid black and dashed black curves denote the frozen-system nullclines $\dot N=0$ and $\dot P=0$, respectively; light-gray arrows show the corresponding frozen vector field. Filled black circles indicate stable equilibria, whereas hollow black circles indicate saddle or unstable equilibria. The coloured curves show the simulated trajectories in the $(N,P)$ plane. (A) $\rho=r_0+A$; (B) $\rho=r_0-A$; and (C-E) periodic forcing with $\tau=6.0$, $\tau=9.0$, and $\tau=12.3$, respectively. For Panels C-E, nullclines and vector fields are evaluated for the frozen reference system $\rho=r_0$. Trajectories in Panels A-C are blue, that in Panel D is red, and that in Panel E is purple. Simulations use $r_0=1.2$, $A=0.05$, $K=6.0$, $a=1.1$, $b=0.32$, $m=0.45$, $N_a=1.00$, $P_a=0.70$, $\alpha=30$, and $\phi=0$. The common initial condition is the locally asymptotically stable coexistence equilibrium of the frozen system with $\rho=r_0$.}
    \label{Figure6}
\end{figure}

\section{Discussion}
How sudden state changes occur is of great interest in ecology and dynamical studies~\cite{scheffer2001catastrophic, ashwin2012tipping}. Different tipping mechanisms can produce similar observational features in time-series data, such as abrupt transitions between alternative states (threshold behaviour~\cite{abrams2026integrating}), yet they arise from fundamentally different causes~\cite{screen2010central}. The tipping literature has extensively classified these triggers: tipping can be driven by, but not limited to, the loss of an attractor (B-tipping~\cite{scheffer2001catastrophic, scheffer2009early, kuehn2011mathematical}), failure to track changing conditions (R-tipping~\cite{wieczorek2011excitability, wieczorek2023rate}), random disturbances (N-tipping~\cite{horsthemke1984noise, ditlevsen2010tipping}), or perturbations applied at vulnerable phases (timing) of an oscillation (P-tipping~\cite{alkhayuon2021phase}). Furthermore, a system can remain far from a stable state for an extended period before eventually approaching it, a phenomenon known as long transient dynamics~\cite{hastings2018transient}. Such dynamics can produce changes that resemble critical transitions, even when no critical transition occurs. Understanding the exact mechanism behind a sudden change is essential for accurate anticipation and effective management~\cite{ritchie2016early, sardanyes2024identifying}.

In this work, we demonstrate that the stability under constant environmental conditions does not necessarily ensure persistence when those same conditions recur in a periodic fashion. Using a predator–prey system with population thresholds (a piecewise-smooth system~\cite{bernardo2008piecewise}), we identify the frequency of periodic variation, or equivalently its period, as a new critical driver of tipping, which we term Frequency-induced tipping (F-tipping). Specifically, periodic variation in prey growth amplifies damped oscillations near the coexistence equilibrium. If the forcing period aligns with the system's intrinsic dynamics, these amplified excursions cross a critical threshold, leading to predator loss or total extinction. F-tipping is therefore driven by two coupled components: frequency-selective amplification and an admissible threshold crossing that determines the subsequent outcome. 

This phenomenon is not an artifact of piecewise-smooth dynamic models. We show that the tipping phenomenon persists when discontinuous switches are replaced with smooth response functions. The piecewise-smooth formulation makes the routes out of coexistence analytically accessible, while the smooth analogue indicates that comparable transitions also occur without abrupt switching. A related example appears in the coupled human-ecological model~\cite{abbott2024and, heggerud2022coupling}, where periodic changes in the net cost of cooperation produce different cooperation outcomes at different forcing periods, although the parameter remains within the bistable range. These results are interpreted through rate dependence and nonlinear averaging; our framework explicitly identifies resonant amplification as the underlying dynamical cause.

To isolate this resonance mechanism, we intentionally employed a deterministic formulation. This is because introducing noise would conflate N-tipping and F-tipping. However, real-world ecosystems are inherently noisy, and under a stochastic framework, frequency interacts with noise to produce well-documented phenomena like stochastic resonance~\cite{mcdonnell2009stochastic, quinn2026tipping}, coherence resonance~\cite{palenzuela2001coherence, kilduff2019frequency, white2024marine}, and cohort resonance in age-structured populations~\cite{bjornstad2004trends}.  

The effects of recurrent environmental forcing have a long history in dynamical systems and ecological research. Recent work has examined how periodic forcing alters tipping near a drifting saddle-node bifurcation~\cite{zhu2015tipping}, and how the magnitude and recurrence interval of discrete disturbances on a state variable determine resilience in flow–kick systems~\cite{meyer2018quantifying}. Particularly relevant to our study, frequency-selective partial tipping has been observed using ensemble-based analysis, which quantifies changes in the destinations of trajectories across the state space~\cite{basak2024partial}. Yet how the selectivity on frequency relates to timescales of external forcing variation remains unquantified. Resonance behaviour in tipping under chaotic forcing is further identified in a low-order ice age model~\cite{quinn2026tipping}. Our work connects resonant amplification around a stable coexistence equilibrium to explicit conditions for two competing collapse routes in a threshold predator–prey system. Starting from the coexistence equilibrium, changing the forcing period can lead to either predator loss or extinction of both populations, with the outcome determined by which admissible threshold is crossed first as resonance amplifies the population response. The mathematical contribution is connecting the population response to this choice of collapse route. This distinction is mechanistically informative because the frozen coexistence prey density, $N^* = m/(ab)$, is independent of the growth rate: periodic growth variation can nevertheless drive prey below its persistence threshold through the coupled population response. These studies establish that the temporal structure of environmental variation can be as consequential as its magnitude (changes in mean values).

Our results emphasize the impact of the state variable due to the resonant frequency in the external force, and the system will therefore transition to an alternative (often unfavourable) state. The tipping outcome also depends on the phase of the periodic forcing, linking this behaviour to P-tipping. Such phenomenon cannot be fully explained by existing tipping mechanism, yet of importance for tipping studies. 

Recognising the mechanism underlying a tipping event is essential for identifying effective ecological management strategies. B-tipping motivates keeping environmental pressures below thresholds at which the desired attractor loses stability or disappears, as in limiting harvesting pressure to avoid collapse~\cite{may1977thresholds}. R-tipping emphasises the importance of the path and speed of parameter variation. For example, a rapid increase in fishing pressure can cause population collapse even when a gradual increase to the same target permits persistence~\cite{yang2025fast}. N-tipping highlights the need for safety margins against extreme events. Additionally, P-tipping shows that the timing of disturbance (at a vulnerable phase) can play a role in the Canada lynx and snowshoe hare population collapse~\cite{alkhayuon2023stochastic}. F-tipping adds a further consideration: under periodic forcing, ecological persistence can depend critically on the frequency of environmental variation relative to the system's intrinsic timescales. Thus, risk assessment does not solely rely only on the magnitude of external forcing, the distance from a bifurcation threshold, or how rapidly the forcing is applied, but can also account for how frequently the forcing is applied so to avoid the resonance effect.

Our work also raises questions about how to anticipate these transitions. Early warning signals based on critical slowing down (CSD) are most directly motivated by weakened local stability near certain bifurcations~\cite{dakos2024tipping}. However, neither R-, N-, nor P-tipping produces the CSD signals~\cite{ritchie2021overshooting, alkhayuon2021phase}. The demonstrated ability of deep learning to anticipate R-tipping also suggests scope for prediction beyond conventional CSD indicators~\cite{huang2024deep}. Future work could therefore focus on developing frequency-specific anticipation tools that look beyond classical variance metrics to detect resonant amplification before the threshold is crossed.

\section{Methods}
\label{sec:method}
\subsection{Filippov predator-prey model with Allee effect}
Using $\mathbb{R}_+^2:=\{(x,y):x\geq0, y\geq0\}$, system~\eqref{predator_prey_model} with~\eqref{step_fun} can be written away from the switching boundaries as
\begin{equation}
\begin{pmatrix}
\dfrac{dN}{dt} \\[6pt]
\dfrac{dP}{dt}
\end{pmatrix}
=
\begin{cases}
f_1(t,(N,P)), & \text{for } (N,P) \in R_1 = \{(N,P)\in\mathbb{R}^2_+:N>N_a,\ P>P_a\},\\
f_2(t,(N,P)), & \text{for } (N,P) \in R_2 = \{(N,P)\in\mathbb{R}^2_+:N<N_a,\ P>P_a\},\\
f_3(t,(N,P)), & \text{for } (N,P) \in R_3 = \{(N,P)\in\mathbb{R}^2_+:N<N_a,\ P<P_a\},\\
f_4(t,(N,P)), & \text{for } (N,P) \in R_4 = \{(N,P)\in\mathbb{R}^2_+:N>N_a,\ P<P_a\},
\end{cases}
\label{rewritten_system}
\end{equation}
where
\begin{equation*}
\begin{aligned}
f_1(t,(N,P))&=
\binom{rN\left(1-\frac{N}{K}\right)-aNP}{baNP-mP},
&
f_2(t,(N,P))&=
\binom{-rN-aNP}{baNP-mP},
\\[0.8em]
f_3(t,(N,P))&=
\binom{-rN}{-mP},
&
f_4(t,(N,P))&=
\binom{rN\left(1-\frac{N}{K}\right)}{-mP}.
\end{aligned}
\end{equation*}
Furthermore, the discontinuity boundary consists of the following four parts,
\begin{equation*}
\begin{aligned}
\Sigma_{(1,2)}&= \overline{R}_1 \cap \overline{R}_2 = \left\{ (N,P) \in \mathbb{R}_+^2 : N = N_a,\ P > P_a \right\},\\
\Sigma_{(2,3)}&= \overline{R}_2 \cap \overline{R}_3 = \left\{ (N,P) \in \mathbb{R}_+^2 : N < N_a,\ P = P_a \right\},\\
\Sigma_{(3,4)}&= \overline{R}_3 \cap \overline{R}_4 = \left\{ (N,P) \in \mathbb{R}_+^2 : N = N_a,\ P < P_a \right\},\\
\Sigma_{(4,1)}&= \overline{R}_4 \cap \overline{R}_1 = \left\{ (N,P) \in \mathbb{R}_+^2 : N > N_a,\ P = P_a \right\}.
\end{aligned}
\end{equation*}
Throughout this paper, unless otherwise specified, we use the baseline parameter set
\begin{equation}
r=1.2,\qquad
K=6,\qquad
a=1.1,\qquad
b=0.32,\qquad
m=0.45,\qquad
N_a=1,\qquad
P_a=0.7.
\label{eqn:parameters}
\end{equation} 
For the continuous system with logistic function~\eqref{logistic_fun}, we set $\alpha=30$.

The following definitions for a generic planar Filippov system~\eqref{rewritten_system} are necessary in this paper~\cite{kuznetsov2003one, di2008discontinuity}.
\begin{definition}\label{Def:regular}
A point $E^*$ in $\mathbb{R}_+^2$ is called a regular equilibrium of system~\eqref{rewritten_system} if there exists $i \in \{1,2,3,4\}$ such that $f_i(E^*)=0$ and $E^* \in R_i$.
\end{definition}

\begin{definition}\label{Def:virtual}
A point $E^*$ in $\mathbb{R}_+^2$ is called a virtual equilibrium of system~\eqref{rewritten_system} if there exists $i \in \{1,2,3,4\}$ such that $f_i(E^*)=0$ and $E^* \in R_j$ where $j \in \{1,2,3,4\}$ and $\ j \ne i$.
\end{definition}

\begin{definition}
For each $(i,j)\in\{(1,2),(2,3),(3,4),(4,1)\}$, let
$n_{(i,j)}(x)$ be a nonzero normal vector to the discontinuity
boundary $\Sigma_{(i,j)}$ at $x$. The sliding region of
$\Sigma_{(i,j)}$ for system~\eqref{rewritten_system} is defined by $\Sigma_{e,(i,j)}=\left\{x\in\Sigma_{(i,j)}:\bigl(f_i(x)\cdot n_{(i,j)}(x)\bigr)\bigl(f_j(x)\cdot n_{(i,j)}(x)\bigr)<0\right\}$. Thus, the normal components of the adjacent vector fields have opposite signs: both vector fields point toward the boundary (attracting sliding) or away from it (escaping sliding).
On $\Sigma_{e,(i,j)}$, the sliding vector field $f_{e,(i,j)}$
is the unique convex combination of $f_i$ and $f_j$ tangent
to $\Sigma_{(i,j)}$. The corresponding sliding dynamics are
given by $\dot{x}=f_{e,(i,j)}(x)$.
\end{definition}

\begin{definition}
A point $E^*$ in $\mathbb{R}_+^2$ is called a pseudo-equilibrium of system~\eqref{rewritten_system} if $E^*$ is an equilibrium of the sliding vector field on $\Sigma_{e,(i,j)}$. That is, $f_{e,(i,j)}(E^*)=0$ for $E^* \in \Sigma_{e,(i,j)} \subset \Sigma_{(i,j)}$. This is an equilibrium of the sliding dynamics, not an equilibrium of the original smooth subsystems.
\end{definition}

\begin{definition}
Let $\langle \cdot,\cdot \rangle$ indicate the standard inner product, $n_1=(1,0)$ is the normal vector to $\Sigma_{(1,2)}$ and $\Sigma_{(3,4)}$, and $n_2=(0,1)$ is the normal vector to $\Sigma_{(2,3)}$ and $\Sigma_{(4,1)}$. A point $E^*$ in $\mathbb{R}_+^2$ is a tangent point of system~\eqref{rewritten_system} if one of the following holds:
\begin{equation*}
\begin{aligned}
&(1)\ \langle f_1(E^*), n_1\rangle \langle f_2(E^*), n_1\rangle = 0 \quad \text{and } E^* \in \Sigma_{(1,2)};\\
&(2)\ \langle f_2(E^*), n_2\rangle \langle f_3(E^*), n_2\rangle = 0 \quad \text{and } E^* \in \Sigma_{(2,3)};\\
&(3)\ \langle f_3(E^*), n_1\rangle \langle f_4(E^*), n_1\rangle = 0 \quad \text{and } E^* \in \Sigma_{(3,4)};\\
&(4)\ \langle f_4(E^*), n_2\rangle \langle f_1(E^*), n_2\rangle = 0 \quad \text{and } E^* \in \Sigma_{(4,1)}.
\end{aligned}
\end{equation*}
\end{definition}

\begin{definition}
A point $E^*=(N,P)$ in $\mathbb{R}_+^2$ is a boundary equilibrium of system~\eqref{rewritten_system} if there exists $i \in \{1,2,3,4\}$ such that $f_i(E^*)=0$ and $E^*$ lies on one of the discontinuity boundaries $\Sigma_{(i,j)}$ where $(i,j)\in\{(1,2),(2,3),(3,4),(4,1)\}$.
\end{definition}

\subsubsection{Dynamics of the subsystems}
\label{Dynamics_subsystems}
This section investigates the dynamics of the subsystems of~\eqref{rewritten_system} in $\mathbb{R}_+^2$. Throughout this section, we assume that $r,a,b,K,m>0$. Three biologically relevant candidate equilibria of the unforced system~\eqref{rewritten_system} are provided at~\eqref{eqn:equilibria}. Mathematical proofs of stability results are provided in the Supplementary Material~\ref{S:subsystems_dynamics}.

\begin{Lemma}
The equilibria of the smooth subsystem $f_1$ are $E_0$, $E_1$, and $E_2$, where the coexistence equilibrium $E_2$ exists if and only if $abK>m$. The extinction equilibrium $E_0$ is a saddle point. The prey-only equilibrium $E_1$ is locally asymptotically stable if $abK<m$, nonhyperbolic if $abK=m$, and a saddle point if $abK>m$. When $abK>m$, the coexistence equilibrium $E_2$ is locally asymptotically stable.
\label{Lemma_R1}
\end{Lemma}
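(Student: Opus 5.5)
The statement immediately above is Lemma~\ref{Lemma_R1}, which concerns only the smooth subsystem $f_1(N,P)=\bigl(rN(1-N/K)-aNP,\ baNP-mP\bigr)$ with constant $r$. I would prove it by direct computation: find the zeros of $f_1$, then linearize at each one.

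\textbf{Equilibria.} From the second component, $P(baN-m)=0$, so either $P=0$ or $N=m/(ab)$.
\begin{itemize}
\item If $P=0$, the first component becomes $rN(1-N/K)=0$. This gives $E_0=(0,0)$ and $E_1=(K,0)$.
\item If $N=N^*=m/(ab)$, then $r(1-N^*/K)=aP$, which gives $P^*=\frac{r}{a}\bigl(1-\frac{m}{abK}\bigr)$.
\end{itemize}
$P^*$ lies in the open positive quadrant, and differs from $E_1$, exactly when $abK>m$. This proves the existence claim.

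\textbf{Linearization.} The Jacobian is
\[
J(N,P)=\begin{pmatrix} r-\frac{2rN}{K}-aP & -aN\\ baP & baN-m\end{pmatrix}.
\]
\begin{itemize}
\item At $E_0$, $J=\mathrm{diag}(r,-m)$. The eigenvalues have opposite signs, so $E_0$ is a saddle.
\item At $E_1$, $J$ is upper triangular with eigenvalues $-r$ and $baK-m$.
  \begin{itemize}
  \item If $abK<m$, both eigenvalues are negative, so $E_1$ is locally asymptotically stable.
  \item If $abK=m$, one eigenvalue is zero, so $E_1$ is nonhyperbolic.
  \item If $abK>m$, the eigenvalues have opposite signs, so $E_1$ is a saddle.
  \end{itemize}
\item At $E_2$, use $r-rN^*/K-aP^*=0$ and $baN^*-m=0$ to simplify the entries. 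This gives
\[
J(E_2)=\begin{pmatrix} -\frac{rN^*}{K} & -aN^*\\ baP^* & 0\end{pmatrix}.
\]
Its trace is $-rm/(abK)<0$ and its determinant is $ba^2N^*P^*>0$ whenever $abK>m$. A trace--determinant (Routh--Hurwitz) argument then gives two eigenvalues with negative real part, so $E_2$ is locally asymptotically stable. The same matrix yields the eigenvalues $\lambda_\pm$ quoted earlier.
\end{itemize}

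\textbf{Main obstacle.} The only delicate point is the nonhyperbolic case $abK=m$. The lemma only asserts nonhyperbolicity there, so no centre-manifold analysis is needed; I would simply note the zero eigenvalue.

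If one also wanted local stability of $E_2$ as an equilibrium of the full Filippov system, one would add that $E_2$ is regular, so $f_1$ governs a neighbourhood of it. That extra step is not part of the lemma as stated.
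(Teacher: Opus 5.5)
Your proposal is correct and follows the paper's proof essentially step for step. Like the paper, you solve $f_1=0$ via the factor $P(baN-m)$, evaluate the same Jacobian at $E_0$, $E_1$, $E_2$, and conclude stability of $E_2$ from $\operatorname{tr}J(E_2)=-rm/(abK)<0$ and $\det J(E_2)=ba^2N^*P^*=mr\bigl(1-\tfrac{m}{baK}\bigr)>0$; your added observation that $E_2$ merges with $E_1$ when $abK=m$ (and has $P^*<0$ when $abK<m$) makes the ``if and only if'' a bit more explicit than the paper's ``provided $abK>m$''.
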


\begin{theorem}
\label{Theorem_R1}
For the smooth subsystem $f_1$, if $abK>m$, then the positive coexistence equilibrium $E_2$ is globally asymptotically stable in the interior of 
$\mathbb{R}_{+}^2$.
\end{theorem}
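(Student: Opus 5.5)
The plan is a Volterra-type Lyapunov function combined with LaSalle's invariance principle, applied to the smooth subsystem $f_1$ on the open quadrant $\mathrm{int}\,\mathbb{R}_+^2=\{N>0,\,P>0\}$. Here the thresholds $N_a,P_a$ are ignored, so $f_1$ is taken on the whole quadrant. Assume $abK>m$, so that by Lemma~\ref{Lemma_R1} the point $E_2=(N^*,P^*)$ exists with $N^*,P^*>0$. Its coordinates satisfy the equilibrium relations
\begin{equation*}
r\left(1-\frac{N^*}{K}\right)=aP^*, \qquad baN^*=m.
\end{equation*}

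First I will note that the interior of $\mathbb{R}_+^2$ is forward invariant. Both components of $f_1$ have the form $N\cdot(\cdot)$ and $P\cdot(\cdot)$, so the coordinate axes are invariant and, by uniqueness, interior trajectories cannot reach them in finite time. I will then define
\begin{equation*}
V(N,P)=b\left(N-N^*-N^*\ln\frac{N}{N^*}\right)+\left(P-P^*-P^*\ln\frac{P}{P^*}\right).
\end{equation*}
This function has the following properties:
\begin{itemize}
\item it is $C^1$ on the interior;
\item $V\ge 0$, with equality only at $E_2$;
\item $V\to\infty$ as $N\to 0^+$, $P\to 0^+$, $N\to\infty$ or $P\to\infty$.
\end{itemize}
Consequently every sublevel set $\{V\le c\}$ is a compact subset of the open quadrant.

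Next I will differentiate $V$ along solutions, writing $\dot N=N\bigl(r(1-N/K)-aP\bigr)$ and $\dot P=P(baN-m)$. Substituting the equilibrium relations gives
\begin{equation*}
\dot V=b(N-N^*)\Bigl(-\tfrac{r}{K}(N-N^*)-a(P-P^*)\Bigr)+(P-P^*)\,ba(N-N^*).
\end{equation*}
The weight $b$ in $V$ is chosen precisely so that the cross terms cancel. This leaves
\begin{equation*}
\dot V=-\frac{br}{K}(N-N^*)^2\le 0.
\end{equation*}
It follows that sublevel sets are forward invariant. Every interior trajectory is therefore bounded, stays away from the axes, and exists for all $t\ge0$.

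Finally I will apply LaSalle's invariance principle on the compact invariant set $\{V\le V(N_0,P_0)\}$. The $\omega$-limit set lies in the largest invariant subset of $\{\dot V=0\}=\{N=N^*\}$. On an invariant solution with $N\equiv N^*$ we need $\dot N=0$, which forces $r(1-N^*/K)=aP$ and hence $P=P^*$. So the largest invariant subset is $\{E_2\}$, and every interior trajectory converges to $E_2$. Lyapunov stability of $E_2$ follows from $\dot V\le 0$ and the positive definiteness of $V$ about $E_2$, and together these give global asymptotic stability.

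The only step requiring care is this LaSalle step. $\dot V$ is merely negative semidefinite, so one must check that $\{N=N^*\}$ contains no invariant set other than $E_2$; the computation above settles this directly. The remaining delicate point is compactness of the sublevel sets inside the open quadrant, which is what allows LaSalle to be applied there.
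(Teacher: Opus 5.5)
Your proposal is correct and follows essentially the same route as the paper: the same Volterra-type function $V$ with weight $b$ on the prey term, the same computation $\dot V=-\frac{br}{K}(N-N^*)^2$, compactness of the sublevel sets, and LaSalle's principle, with the invariant subset of $\{N=N^*\}$ reducing to $\{E_2\}$. Your explicit restriction to the open quadrant is slightly more careful than the paper's closing sentence, which claims convergence for all of $\mathbb{R}_+^2$ even though trajectories starting on the axes (e.g.\ at $(K,0)$) do not converge to $E_2$.
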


\begin{Lemma}
\label{Lemma_R2}
The equilibria of the smooth subsystem $f_2$ are $E_0$ and $\left(\frac{m}{ab},-\frac{r}{a}\right)$. Since the second equilibrium has negative predator density, it is not biologically admissible. $E_0$ is the only biologically relevant equilibrium, and it is locally asymptotically stable.
\end{Lemma}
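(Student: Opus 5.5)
The plan is to compute the equilibria of $f_2$ directly from the factored form of the vector field, and then to settle the stability of $E_0$ by linearization. Here $f_2(N,P)=\bigl(-N(r+aP),\,P(baN-m)\bigr)$, with $r,a,b,m>0$ as assumed at the start of this section; the growth rate is frozen at the constant $r$, so $f_2$ is autonomous. I expect no step to be a real obstacle. The only point needing care is making sure the case split in the nullcline analysis is exhaustive.

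\emph{Equilibria.} Setting $f_2=0$ gives $N(r+aP)=0$ and $P(baN-m)=0$, and I split on the first factor.
\begin{itemize}
\item If $N=0$, the second equation becomes $-mP=0$. Since $m>0$, this forces $P=0$ and gives $E_0=(0,0)$.
\item If $N\neq 0$, then $r+aP=0$, i.e.\ $P=-r/a\neq 0$. The second equation then forces $baN=m$, i.e.\ $N=m/(ab)$, and gives the point $\bigl(\tfrac{m}{ab},-\tfrac{r}{a}\bigr)$.
\end{itemize}
These two cases exhaust the solutions. Because $r,a>0$, the second point has $P=-r/a<0$, so it lies outside $\mathbb{R}_+^2$ and is not biologically admissible. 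Hence $E_0$ is the only biologically relevant equilibrium.

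\emph{Stability of $E_0$.} The Jacobian of $f_2$ is
\begin{equation*}
J_2(N,P)=\begin{pmatrix} -r-aP & -aN\\ baP & baN-m\end{pmatrix},
\end{equation*}
so $J_2(0,0)=\mathrm{diag}(-r,-m)$. Its eigenvalues $-r<0$ and $-m<0$ are real and negative, so $E_0$ is a hyperbolic stable node of $f_2$. By the Hartman--Grobman theorem (or the linearized stability principle), $E_0$ is locally asymptotically stable for the subsystem $f_2$, which completes the lemma.

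As an optional consistency check, $J_2$ at $\bigl(\tfrac{m}{ab},-\tfrac{r}{a}\bigr)$ is $\begin{pmatrix}0&-m/b\\-br&0\end{pmatrix}$, with eigenvalues $\pm\sqrt{mr}$. That point is therefore a saddle, but this fact is not needed for the statement.
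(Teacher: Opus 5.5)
Your proposal is correct and follows essentially the same route as the paper: solve $f_2=0$ to get $E_0$ and $\bigl(\frac{m}{ab},-\frac{r}{a}\bigr)$, discard the latter because its predator component is negative, and linearize at $E_0$ to obtain $\mathrm{diag}(-r,-m)$ with negative eigenvalues. Your explicit case split on $N(r+aP)=0$ and your saddle check at the inadmissible point add detail beyond the paper's argument, but the statement does not need them.
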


\begin{theorem}
\label{Theorem_R2}
For the smooth subsystem $f_2$, $E_0$ is globally asymptotically stable in $\mathbb{R}_+^2$. The equilibrium $\left(\frac{m}{ba},-\frac{r}{a}\right)$ is not biologically admissible because it does not belong to $\mathbb{R}_+^2$.
\end{theorem}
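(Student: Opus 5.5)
The plan is to prove global asymptotic stability of $E_0$ for the subsystem $f_2$ with a linear Lyapunov function. This function is suggested by the predation terms: they are $-aNP$ in the prey equation and $+baNP$ in the predator equation, so weighting $N$ by $ba$ should cancel them. Throughout, write the subsystem as
\begin{equation*}
\dot N = -N(r+aP), \qquad \dot P = P(baN-m),
\end{equation*}
with $r,a,b,m>0$ as assumed in Section~\ref{Dynamics_subsystems}.

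\textbf{Step 1 (invariance, well-posedness, admissibility).} The vector field is polynomial, so solutions exist and are unique locally. Both coordinate axes are invariant, since $\dot N=0$ when $N=0$ and $\dot P=0$ when $P=0$. Uniqueness then implies that $\mathbb{R}_+^2$ and its interior are forward invariant. The equilibria solve $N(r+aP)=0$ and $P(baN-m)=0$. If $N=0$, the second equation forces $P=0$. If $N\neq0$, then $P=-r/a$ and $N=m/(ba)$. Since $-r/a<0$, the point $\left(\frac{m}{ba},-\frac{r}{a}\right)$ lies outside $\mathbb{R}_+^2$, which settles the second claim and recovers Lemma~\ref{Lemma_R2}.

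\textbf{Step 2 (Lyapunov function).} Take $V(N,P)=baN+P$, which is positive definite on $\mathbb{R}_+^2$ and vanishes only at $E_0$. Along trajectories,
\begin{equation*}
\dot V = -rbaN - baNP + baNP - mP = -rbaN - mP \le -\min(r,m)\,V.
\end{equation*}
Hence $V(t)\le V(0)e^{-\min(r,m)t}$. This bound gives three things at once:
\begin{itemize}
\item solutions stay bounded, so they exist for all $t\ge0$;
\item both $N$ and $P$ converge to $0$ exponentially from every initial condition in $\mathbb{R}_+^2$;
\item $E_0$ is Lyapunov stable, because the sublevel sets of $V$ are neighbourhoods of $E_0$ relative to $\mathbb{R}_+^2$ that shrink to $E_0$.
\end{itemize}
Local asymptotic stability is also visible directly from the Jacobian $\mathrm{diag}(-r,-m)$ at $E_0$, consistent with Lemma~\ref{Lemma_R2}.

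\textbf{Step 3 (conclusion and main obstacle).} Stability together with global attractivity in $\mathbb{R}_+^2$ gives global asymptotic stability of $E_0$. The only potential obstacle is finite-time blow-up of $P$ before the decay takes effect, because $\dot P$ can be positive while $N>m/(ba)$. The linear bound on $V$ removes this issue entirely. As a backup argument, one can use $N(t)\le N_0e^{-rt}$, which follows from $\dot N\le -rN$. Then $\dot P\le P(baN_0e^{-rt}-m)$, which bounds $P$ explicitly and shows that it decays once $baN(t)<m/2$.
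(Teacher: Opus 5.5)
Your Step 2 has an algebra error, and as written it breaks the main argument. With $V=baN+P$, the prey equation contributes $ba\,\dot N=-rbaN-ba^{2}NP$, not $-rbaN-baNP$. So in fact $\dot V=-rbaN+ba(1-a)NP-mP$, and the predation terms cancel only when $a=1$.

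For $a\ge 1$ (for example the baseline $a=1.1$) the leftover term is nonpositive, so your bound $\dot V\le-\min(r,m)V$ happens to survive. For $0<a<1$, which the standing hypothesis $r,a,b,m>0$ allows, the term $ba(1-a)NP$ is positive and dominates when $NP$ is large. Then $V$ is not even nonincreasing, and none of your three bulleted conclusions follows.

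The fix is the weight. The conversion factor between prey lost and predators gained is $b$, not $ba$; the paper's Lyapunov function for Theorem~\ref{Theorem_R1} uses the same weight $b$. Taking $V=bN+P$ gives exactly $\dot V=-rbN-mP\le-\min(r,m)V$, and your Steps 2--3 then go through verbatim.

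Your backup argument is correct and is precisely the paper's proof. The paper uses $\dot N\le -rN$ to get $N(t)\le N(0)e^{-rt}$, hence $\int_0^t N\,ds\le N(0)/r$. It then bounds $P(t)=P(0)\exp\{\int_0^t(baN-m)\,ds\}\le P(0)\exp\{baN(0)/r-mt\}$. Finally it combines this global attractivity with local stability from the Jacobian $\mathrm{diag}(-r,-m)$ at $E_0$.

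Compared with that route, the corrected linear Lyapunov function gives a single uniform decay rate $\min(r,m)$. It also gives Lyapunov stability directly from the shrinking triangular sublevel sets, without appealing to the linearization. The paper's route instead gives component-wise rates: $r$ for $N$, and $m$ for $P$ with the transient factor $e^{baN(0)/r}$.
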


\begin{Lemma}
\label{Lemma_R3}
The only equilibrium of the smooth subsystem $f_3$ is the extinction equilibrium $E_0$. Moreover, $E_0$ is locally asymptotically stable.
\end{Lemma}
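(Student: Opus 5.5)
The plan is a direct computation in two steps: solve $f_3=0$ to get the equilibria, then linearize at $E_0$. The subsystem in $R_3$ is fully decoupled and linear,
\begin{equation*}
\dot N=-rN,\qquad \dot P=-mP,
\end{equation*}
with $r,m>0$ as assumed throughout this section.

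\textbf{Equilibria.} Setting $f_3(N,P)=0$ gives $rN=0$ and $mP=0$. Since $r>0$ and $m>0$, this forces $N=0$ and $P=0$. Hence $E_0=(0,0)$ is the only equilibrium of $f_3$. Note that $E_0$ lies in $\overline{R}_3$, because $0<N_a$ and $0<P_a$.

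\textbf{Local stability.} The Jacobian of $f_3$ is constant:
\begin{equation*}
J=\begin{pmatrix}-r&0\\0&-m\end{pmatrix}.
\end{equation*}
Its eigenvalues are $-r<0$ and $-m<0$. So $E_0$ is a hyperbolic stable node, and the linearization theorem gives local asymptotic stability.

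\textbf{Explicit solution.} Because the system is linear, we can also verify the claim directly. The solution is
\begin{equation*}
N(t)=N_0e^{-rt},\qquad P(t)=P_0e^{-mt}.
\end{equation*}
Both components decrease monotonically to $0$. Consequently, any initial condition in $R_3$ stays in $R_3$: since $N(t)\le N_0<N_a$ and $P(t)\le P_0<P_a$, the trajectory never reaches $\Sigma_{(2,3)}$ or $\Sigma_{(3,4)}$. Every such trajectory converges to $E_0$, so $E_0$ attracts all of $R_3$ even within the full Filippov system.

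There is no real obstacle here. The only point needing care is the positivity of $r$ and $m$. In the forced setting, where $r$ is replaced by $\rho(t)$, the same argument applies with $N(t)=N_0\exp\!\left(-\int_0^t\rho(s)\,ds\right)$. Since $\rho\ge r_0-A>0$, this $N(t)$ still decays to $0$.
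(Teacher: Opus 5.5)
Your proof is correct and follows the same route as the paper: with $r,m>0$, setting $f_3=0$ leaves only $E_0$, and the constant Jacobian $\begin{pmatrix}-r&0\\0&-m\end{pmatrix}$ gives local asymptotic stability. Your explicit-solution paragraph is not needed for this lemma; it essentially reproduces the paper's separate proof of global stability for $f_3$.
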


\begin{theorem}
\label{Theorem_R3}
For the smooth subsystem $f_3$, the extinction equilibrium $E_0$ is globally asymptotically stable in $\mathbb{R}_+^2$.
\end{theorem}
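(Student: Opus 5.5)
The subsystem $f_3$ is linear and decoupled: $\dot N=-rN$ and $\dot P=-mP$, with $r,m>0$. The plan is therefore to solve it explicitly rather than build a Lyapunov function. For any initial condition $(N_0,P_0)\in\mathbb{R}_+^2$ the unique solution is
\[
N(t)=N_0e^{-rt},\qquad P(t)=P_0e^{-mt},\qquad t\ge0 .
\]
This solution exists for all $t\ge0$ and remains in $\mathbb{R}_+^2$, since each component keeps the sign of its initial value. That gives forward invariance of $\mathbb{R}_+^2$ and global existence in one step.

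The proof then has two parts.

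\emph{Global attraction.} Both components decay exponentially:
\[
\|(N(t),P(t))\|\le e^{-\min(r,m)t}\,\|(N_0,P_0)\|\to0 \quad\text{as } t\to\infty ,
\]
for every initial point in $\mathbb{R}_+^2$.

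\emph{Stability.} The same estimate shows that if $\|(N_0,P_0)\|<\delta$, then $\|(N(t),P(t))\|<\delta$ for all $t\ge0$. So the choice $\delta=\varepsilon$ gives Lyapunov stability. This is consistent with Lemma~\ref{Lemma_R3}: the Jacobian is $\mathrm{diag}(-r,-m)$, whose eigenvalues are negative, so $E_0$ is a stable node.

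Combining attraction and Lyapunov stability gives global asymptotic stability of $E_0$ in $\mathbb{R}_+^2$. Uniqueness of the equilibrium, already stated in Lemma~\ref{Lemma_R3}, also follows directly: $-rN=0$ and $-mP=0$ force $N=P=0$.

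If a Lyapunov-style argument is preferred for uniformity with Theorems~\ref{Theorem_R1} and~\ref{Theorem_R2}, one can take $V=N^2+P^2$. Then
\[
\dot V=-2rN^2-2mP^2\le -2\min(r,m)\,V ,
\]
which is negative definite and radially unbounded, and it yields the same conclusion.

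There is no genuine obstacle here. The only point needing care is the setting: the statement concerns the smooth subsystem $f_3$ considered on all of $\mathbb{R}_+^2$, not the Filippov dynamics restricted to $R_3$. The argument must therefore not invoke switching, and it needs only $r,m>0$. In the forced setting, where $r$ is replaced by $\rho(t)$, one would use $N(t)=N_0\exp\!\left(-\int_0^t\rho(s)\,ds\right)$ together with $\rho\ge r_0-A>0$, but that case is not required by the statement.
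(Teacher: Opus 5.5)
Your proposal is correct and follows essentially the same route as the paper: you solve the decoupled linear system explicitly as $N(t)=N_0e^{-rt}$, $P(t)=P_0e^{-mt}$, read off invariance of $\mathbb{R}_+^2$ and convergence to $E_0$, and combine this with stability. The only minor difference is that you get Lyapunov stability directly from the bound $\|(N(t),P(t))\|\le e^{-\min(r,m)t}\|(N_0,P_0)\|$ and state the decay of both components explicitly, whereas the paper cites the local asymptotic stability from Lemma~\ref{Lemma_R3}.
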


\begin{Lemma}
\label{Lemma_R4}
The equilibria of the smooth subsystem $f_4$ are $E_0$ and $E_1$. The extinction equilibrium $E_0$ is a saddle point, while the prey-only equilibrium $E_1$ is locally asymptotically stable.
\end{Lemma}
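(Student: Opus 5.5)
The plan is a direct computation: find the zeros of $f_4$ in $\mathbb{R}_+^2$, then linearize at each one. Write
\begin{equation*}
f_4(N,P)=\binom{rN\left(1-\frac{N}{K}\right)}{-mP}.
\end{equation*}
The system is decoupled: $\dot N$ depends only on $N$ and $\dot P$ only on $P$.

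\textbf{Equilibria.} Setting $\dot P=-mP=0$ with $m>0$ forces $P=0$. Setting $rN(1-N/K)=0$ with $r>0$ forces $N=0$ or $N=K$. So the only equilibria are $E_0=(0,0)$ and $E_1=(K,0)$, and both lie in $\mathbb{R}_+^2$ since $K>0$.

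\textbf{Jacobian.} Because the system is decoupled, the Jacobian is diagonal:
\begin{equation*}
J(N,P)=\begin{pmatrix} r\left(1-\frac{2N}{K}\right) & 0\\ 0 & -m\end{pmatrix}.
\end{equation*}
The eigenvalues are read off the diagonal.

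\textbf{Stability.}
\begin{itemize}
\item At $E_0$ the eigenvalues are $r>0$ and $-m<0$. The equilibrium is hyperbolic with eigenvalues of opposite sign, so it is a saddle. The unstable manifold is the $N$-axis and the stable manifold is the $P$-axis.
\item At $E_1$ the eigenvalues are $r(1-2)=-r<0$ and $-m<0$. Both are negative, so $E_1$ is a hyperbolic stable node. By the Hartman--Grobman theorem (or the linearization principle) it is locally asymptotically stable.
\end{itemize}

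\textbf{Optional global check.} Decoupling also allows explicit solutions. We have $P(t)=P_0e^{-mt}$, and $N(t)$ is the logistic solution. Hence every trajectory with $N_0>0$ tends to $E_1$. This confirms the local picture, though the lemma does not require it.

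There is no genuine obstacle here. The only point needing care is that $E_1$ is nondegenerate: its eigenvalues $-r$ and $-m$ are nonzero because $r,m>0$. This guarantees hyperbolicity, so linearization legitimately determines local stability.
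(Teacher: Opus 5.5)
Your proposal is correct and matches the paper's proof: both find the equilibria $E_0$ and $E_1$ from the decoupled equations, write down the diagonal Jacobian, and read off the eigenvalues $r,-m$ at $E_0$ (saddle) and $-r,-m$ at $E_1$ (locally asymptotically stable). Your optional global check via the explicit solutions is exactly what the paper does separately to prove Theorem~\ref{Theorem_R4}.
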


\begin{theorem}
\label{Theorem_R4}
The smooth subsystem $f_4$ has two equilibria in $\mathbb{R}_+^2$: $E_0$ and $E_1$. The equilibrium $E_0$ is unstable, while $E_1$ is globally asymptotically stable for all initial conditions with $N(0)>0$ and
$P(0)\ge 0$.
\end{theorem}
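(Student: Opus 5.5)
The $f_4$ subsystem decouples completely, so I would prove Theorem~\ref{Theorem_R4} by treating the two equations separately. In $R_4$ the system reads
\[
\dot N = rN\left(1-\frac{N}{K}\right), \qquad \dot P=-mP,
\]
and the $N$-equation does not involve $P$, and vice versa. The proof has three steps.

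\textbf{Equilibria.} Setting $\dot P=0$ with $m>0$ forces $P=0$. Setting $\dot N=0$ forces $N=0$ or $N=K$. So the only equilibria in $\mathbb{R}_+^2$ are $E_0=(0,0)$ and $E_1=(K,0)$, which agrees with Lemma~\ref{Lemma_R4}.

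\textbf{Local behaviour.} The Jacobian is diagonal:
\[
J(N,P)=\mathrm{diag}\left(r-\frac{2rN}{K},\,-m\right).
\]
At $E_0$ the eigenvalues are $r>0$ and $-m<0$. So $E_0$ is a hyperbolic saddle, hence unstable: points $(\epsilon,0)$ with $\epsilon>0$ are pushed away from $E_0$ along the $N$-axis. At $E_1$ the eigenvalues are $-r<0$ and $-m<0$. So $E_1$ is a locally asymptotically stable node.

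\textbf{Global convergence.} I would solve both equations explicitly rather than build a Lyapunov function. The predator solution is $P(t)=P(0)e^{-mt}$, which tends to $0$ for every $P(0)\ge 0$. The prey equation is the logistic equation, with solution
\[
N(t)=\frac{K N(0)}{N(0)+(K-N(0))e^{-rt}}.
\]
For $N(0)>0$ the denominator stays strictly positive for all $t\ge0$, whether $N(0)<K$ or $N(0)>K$. Hence the solution exists globally and $N(t)\to K$ as $t\to\infty$. Therefore $(N(t),P(t))\to E_1$ for all $N(0)>0$ and $P(0)\ge0$. Combined with the local stability of $E_1$, this gives global asymptotic stability on that set.

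The only point needing care is the role of the invariant axis $N=0$. Initial conditions with $N(0)=0$ stay on it and converge to $E_0$, which is exactly why the theorem excludes them. This also confirms that $E_0$, being a saddle, attracts only its stable manifold $\{N=0\}$.

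Note that this is a statement about the smooth subsystem $f_4$ on all of $\mathbb{R}_+^2$, not about the Filippov dynamics restricted to $R_4$. No switching analysis is therefore required. There is no real obstacle here beyond checking that the denominator of the logistic solution never vanishes.
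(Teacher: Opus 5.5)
Your proposal is correct and follows essentially the same route as the paper's proof: the explicit solutions $P(t)=P(0)e^{-mt}$ and the logistic formula for $N(t)$ give convergence to $E_1$ whenever $N(0)>0$ and $P(0)\ge 0$, and the diagonal Jacobian shows that $E_0$ is an unstable saddle and $E_1$ is locally asymptotically stable. You also check explicitly that the logistic denominator stays positive for $t\ge 0$, including when $N(0)>K$, and you explain why initial conditions on the axis $N=0$ are excluded; the paper leaves both points implicit.
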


The preceding analysis shows that, while a trajectory remains in the interior of a single region and does not encounter a switching boundary $\Sigma_{(i,j)}$, with $ij\in\{(1,2),(2,3),(3,4),(4,1)\}$, its long-term behaviour is governed by the corresponding smooth subsystem. In particular, when $abK>m$, which guarantees that the P-component of $E_2$ is positive, trajectories remaining in the interior of $R_1$ approach the coexistence equilibrium $E_2$; those remaining in the interiors of $R_2$ and $R_3$ approach the extinction equilibrium $E_0$; and those remaining in the interior of $R_4$ approach the prey-only equilibrium $E_1$. However, when a trajectory reaches a switching boundary, the dynamics are no longer determined by a single smooth vector field; crossing, sliding, or escaping motion may occur. This motivates the analysis of the Filippov sliding dynamics in the following section.

\subsubsection{Filippov Sliding Dynamics}
\label{Filippov_Sliding}
Filippov sliding dynamics investigates the behaviour of trajectories on the discontinuity boundaries $\Sigma_{(i,j)}$, with $(i,j)\in\{(1,2),(2,3),(3,4),(4,1)\}$, where the vector field is no longer uniquely determined by a single smooth subsystem. On such boundaries, the adjacent vector fields $f_i$ and $f_j$ may point in the same direction across the boundary, giving crossing dynamics, or they may point in opposite normal directions, giving sliding dynamics. The main conclusions are summarized in the following propositions. Detailed Filippov calculations are provided in the Supplementary Material~\ref{S:sliding_dynamics}. 

\begin{proposition}[Existence and type of sliding segments]
None of the switching manifolds $\Sigma_{(1,2)}$, $\Sigma_{(2,3)}$, $\Sigma_{(3,4)}$, and $\Sigma_{(4,1)}$ admits an attracting sliding segment. Possible sliding motion is only of escaping type. More precisely, escaping sliding regions are
\begin{subequations}\label{eq:sigma-e}
\renewcommand{\theequation}{\theparentequation\Alph{equation}}
\begin{align}
\Sigma_{e,(1,2)}&= \left\{(N,P): N=N_a,\; P_a<P<\frac{r}{a}
\left(1-\frac{N_a}{K}\right)\right\},
\label{eq:sigma-e-1-2} \\[4pt]
\Sigma_{e,(2,3)}&= \left\{(N,P): \frac{m}{ba}<N<N_a,\; P=P_a\right\},
\label{eq:sigma-e-2-3} \\[4pt]
\Sigma_{e,(3,4)}&= \left\{(N,P): N=N_a,\; 0<P<P_a,\; N_a<K\right\},
\label{eq:sigma-e-3-4} \\[4pt]
\Sigma_{e,(4,1)}&= \left\{(N,P): N>\max\left\{N_a,\frac{m}{ba}\right\},\; P=P_a\right\}.
\label{eq:sigma-e-4-1}
\end{align}

\end{subequations}
Each set is understood to be empty when its defining inequalities cannot be satisfied.
\end{proposition}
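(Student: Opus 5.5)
The plan is a direct computation on each of the four switching manifolds. Each $\Sigma_{(i,j)}$ is a coordinate half-line, so the normal is either $n_1=(1,0)$ or $n_2=(0,1)$. The normal component of each adjacent field $f_i$ is therefore just its $\dot N$ or $\dot P$ entry, evaluated at $N=N_a$ or $P=P_a$. For each manifold I will do three things. First, compute the two normal components. Second, find where their product is negative; this gives $\Sigma_{e,(i,j)}$. Third, read off the direction of each field relative to its own region to decide between attracting and escaping.

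The type test is as follows. With the normal oriented in the increasing-coordinate direction, let $R_{+}$ be the region on the side where the coordinate exceeds the threshold. Sliding is escaping if the field of $R_{+}$ has positive normal component and the other field has negative normal component. It is attracting in the reverse case. I will show that on every manifold the field belonging to the lower side has a normal component of fixed negative sign. That single fact rules out attracting sliding throughout.

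On $\Sigma_{(1,2)}$ ($N=N_a$, $P>P_a$) the components are as follows.
\begin{equation*}
\langle f_1,n_1\rangle=N_a\bigl(r(1-N_a/K)-aP\bigr),\qquad \langle f_2,n_1\rangle=-N_a(r+aP)<0 .
\end{equation*}
The product is negative exactly when $P<\frac{r}{a}(1-N_a/K)$. Since $f_2$ always points into $R_2$, only escaping sliding is possible, and this gives \eqref{eq:sigma-e-1-2}.

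On $\Sigma_{(2,3)}$ ($P=P_a$, $N<N_a$) the components are $\langle f_2,n_2\rangle=P_a(baN-m)$ and $\langle f_3,n_2\rangle=-mP_a<0$. The product is negative iff $N>m/(ba)$, which gives \eqref{eq:sigma-e-2-3}. Here $f_3$ points into $R_3$ and $f_2$ into $R_2$, so the sliding is escaping.

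On $\Sigma_{(3,4)}$ ($N=N_a$, $0<P<P_a$) the components are $\langle f_3,n_1\rangle=-rN_a<0$ and $\langle f_4,n_1\rangle=rN_a(1-N_a/K)$. The latter is positive iff $N_a<K$, which gives \eqref{eq:sigma-e-3-4}. Again this is escaping.

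On $\Sigma_{(4,1)}$ ($P=P_a$, $N>N_a$) the components are $\langle f_4,n_2\rangle=-mP_a<0$ and $\langle f_1,n_2\rangle=P_a(baN-m)$. The product is negative iff $N>m/(ba)$. Combined with $N>N_a$ this gives $N>\max\{N_a,m/(ba)\}$, which is \eqref{eq:sigma-e-4-1}. This is escaping as well.

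In all four cases the lower-side field ($f_2$, $f_3$, $f_3$, $f_4$ respectively) has a strictly negative normal component for every admissible point. An attracting segment would require that component to be positive, so none exists. Sets whose inequalities are incompatible are empty, for example when $r(1-N_a/K)/a\le P_a$, or when $m/(ba)\ge N_a$ on $\Sigma_{(2,3)}$.

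I see no real obstacle here. The only points needing care are bookkeeping ones. One is fixing the sign convention so that ``escaping'' is correctly identified, meaning each field points into its own region. The other is noting that the strict inequalities exclude tangent points, where one component vanishes and the product is zero rather than negative. The computations hold for any fixed $r>0$, so they also apply pointwise in $t$ with $r$ replaced by $\rho(t)$.
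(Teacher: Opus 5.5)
Your proposal is correct and follows the same route as the paper's proof in the supplementary sliding-mode section. On each coordinate half-line you compute the two normal components, note that the lower-side field ($f_2$, $f_3$, $f_3$, $f_4$ respectively) always has a strictly negative normal component so attracting sliding is impossible, and read off each escaping set from where the remaining component is positive, with your explicit orientation check stating the escaping/attracting distinction a little more cleanly than the paper does.
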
 

\begin{proposition}[Pseudo-equilibria on the switching manifolds]\label{Pseudo}
The escaping sliding dynamics on $\Sigma_{(2,3)}$ and $\Sigma_{(3,4)}$ admits no positive pseudo-equilibrium. The segment $\Sigma_{(1,2)}$ admits no isolated positive pseudo-equilibrium, except in the degenerate case $N_a=m/(ba)$, where every point of $\Sigma_{e,(1,2)}$ is an equilibrium of the escaping sliding dynamics. The only switching manifold that can support isolated pseudo-equilibria is $\Sigma_{(4,1)}$, with the candidate pseudo-equilibria 
\begin{equation*}
E_{p4}^{\pm} = \left(\frac{K\pm\sqrt{K^2-\frac{4KmP_a}{br}}}{2}, P_a \right), \quad \text{provided that} \quad K^2-\frac{4KmP_a}{br}\geq 0 \quad \text{and} \quad \frac{K\pm\sqrt{K^2-\frac{4KmP_a}{br}}}{2}>\max\left\{N_a,\frac{m}{ba}\right\}.
\end{equation*}
\end{proposition}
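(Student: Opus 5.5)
The plan is to compute the Filippov sliding vector field explicitly on each of the four escaping segments from the previous proposition and then look for its zeros. On $\Sigma_{e,(i,j)}$ I write $f_{e,(i,j)}=\mu f_i+(1-\mu)f_j$ and fix $\mu$ by requiring the normal component to vanish. For $\Sigma_{(1,2)}$ and $\Sigma_{(3,4)}$ the normal is $n_1$, so the $N$-component must vanish. For $\Sigma_{(2,3)}$ and $\Sigma_{(4,1)}$ the normal is $n_2$, so the $P$-component must vanish. A pseudo-equilibrium is then a zero of the remaining tangential component, with $\mu\in(0,1)$ and the point lying in $\Sigma_{e,(i,j)}$.

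\emph{Vertical boundaries.} On $\Sigma_{(1,2)}$, with $N=N_a$, the $P$-components of $f_1$ and $f_2$ coincide; both equal $P(baN_a-m)$. Hence every convex combination, and in particular the sliding field, satisfies
\[
\dot P=P(baN_a-m).
\]
For $P>P_a>0$ this vanishes exactly when $N_a=m/(ba)$. So there is no isolated positive pseudo-equilibrium, and in the degenerate case the whole segment $\Sigma_{e,(1,2)}$ consists of equilibria of the sliding dynamics.

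On $\Sigma_{(3,4)}$ the $P$-components of $f_3$ and $f_4$ both equal $-mP$. The sliding field is therefore $\dot P=-mP$, which is nonzero for $0<P<P_a$, so there is no pseudo-equilibrium there.

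\emph{Horizontal boundaries.} Here $P=P_a$. The $P$-component of $f_2$ and of $f_1$ is $P_a(baN-m)$, while that of $f_3$ and of $f_4$ is $-mP_a$. Setting $\mu P_a(baN-m)+(1-\mu)(-mP_a)=0$ gives
\[
\mu=\frac{m}{baN},
\]
and this lies in $(0,1)$ exactly when $N>m/(ba)$, consistent with the segments of the previous proposition.

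On $\Sigma_{(2,3)}$ the tangential component is
\[
\dot N=-rN-\mu aNP_a=-rN-\frac{mP_a}{b}<0.
\]
So again there is no positive pseudo-equilibrium.

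On $\Sigma_{(4,1)}$ the same $\mu$ gives
\[
\dot N=rN\left(1-\frac{N}{K}\right)-\frac{mP_a}{b}.
\]
Setting this to zero yields the quadratic $\frac{r}{K}N^2-rN+\frac{mP_a}{b}=0$. Its roots are $N=\tfrac12\bigl(K\pm\sqrt{K^2-4KmP_a/(br)}\bigr)$, which are real precisely when the discriminant is nonnegative. Such a root is a genuine pseudo-equilibrium only if the point lies in $\Sigma_{e,(4,1)}$, that is, $N>\max\{N_a,m/(ba)\}$. These are exactly the stated candidates $E_{p4}^{\pm}$ and conditions.

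There is no real obstacle in this argument. The only care needed is bookkeeping: confirming that $\mu\in(0,1)$ on each segment, so that the convex combination is legitimate, and noting that positivity of the populations rules out the trivial zeros $P=0$ and $N=0$.
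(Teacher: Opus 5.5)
Your proposal is correct and matches the paper's proof in the supplementary sliding-mode section. The paper also forms the Filippov convex combination on each escaping segment and obtains the same sliding fields: $\dot P=P(baN_a-m)$ on $\Sigma_{e,(1,2)}$, $\dot P=-mP$ on $\Sigma_{e,(3,4)}$, and, with weight $m/(baN)$, $\dot N=-rN-mP_a/b$ on $\Sigma_{e,(2,3)}$ and $\dot N=rN(1-N/K)-mP_a/b$ on $\Sigma_{e,(4,1)}$, then solves the same quadratic. The one small difference is that on the vertical boundaries you skip the paper's explicit weights $K(r+aP)/(r(2K-N_a))$ and $K/(2K-N_a)$ by noting that the two fields have identical $P$-components; this shortcut is valid, since the weight automatically lies in $(0,1)$ on a sliding segment.
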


\subsubsection{Bifurcation diagram with respect to \texorpdfstring{$r$}{r}}\label{Bifurcation_diagram}
We now examine the bifurcation structure with respect to the intrinsic prey growth rate $r$. In particular, we focus on the transition that occurs when the coexistence equilibrium $E_2$ of the smooth subsystem $f_1$ in region $R_1$ reaches the switching boundary $N=N_a$ or $P=P_a$. Using the parameter values in~\eqref{eqn:parameters}, we obtain $(N^*,P^*)=(1.2784,0.7154r)$. For $E_2(r)$ to be virtual in $R_1$, it must satisfy $N^*>N_a$ and $P^*(r)>P_a$. The first condition is automatically satisfied since $N^*=1.2784>1=N_a$. The second condition gives the critical value
\begin{equation}
    r_c=\frac{aP_a}{1-\frac{m}{baK}}\approx0.978484.
    \label{eq:critical_r_c}
\end{equation}
$E_2(r)$ is not virtual for $r<r_c$, and lies on the switching boundary $P=P_a$ at $r=r_c$. Since $E_2$ of the subsystem $f_1$ is locally stable whenever it is virtual, this transition corresponds to the emergence of a stable admissible coexistence equilibrium at $r=r_c$.

The bifurcation diagram of system~\eqref{rewritten_system} is shown in Figure~\ref{fig:bifurcation_phase_portraits_rc} (A,B). The stable boundary equilibria $E_0=(0,0)$ and $E_1=(K,0)$ are plotted together with the stable coexistence branch $E_2(r)$. The pseudo-equilibria on the escaping sliding segment $\Sigma_{e,(4,1)}$ are shown as dashed curves, indicating their instability. The vertical line $r=r_c$ marks the point where $E_2(r)$ crosses the switching boundary and becomes virtual in $R_1$. Panels~(C) and~(D) compare the phase portraits slightly below and above this critical value $r_c$. With initial condition near $E_2$, if $r<r_c$, the point $E_2(r)$ lies below the switching threshold $P=P_a$ and is therefore not a virtual equilibrium in $R_1$; if $r>r_c$, the same branch has entered $R_1$, and the trajectory initialised at $E_2(r)$ remains at the stable coexistence equilibrium.

\begin{figure}[ht]
    \centering
    \includegraphics[width=\textwidth]{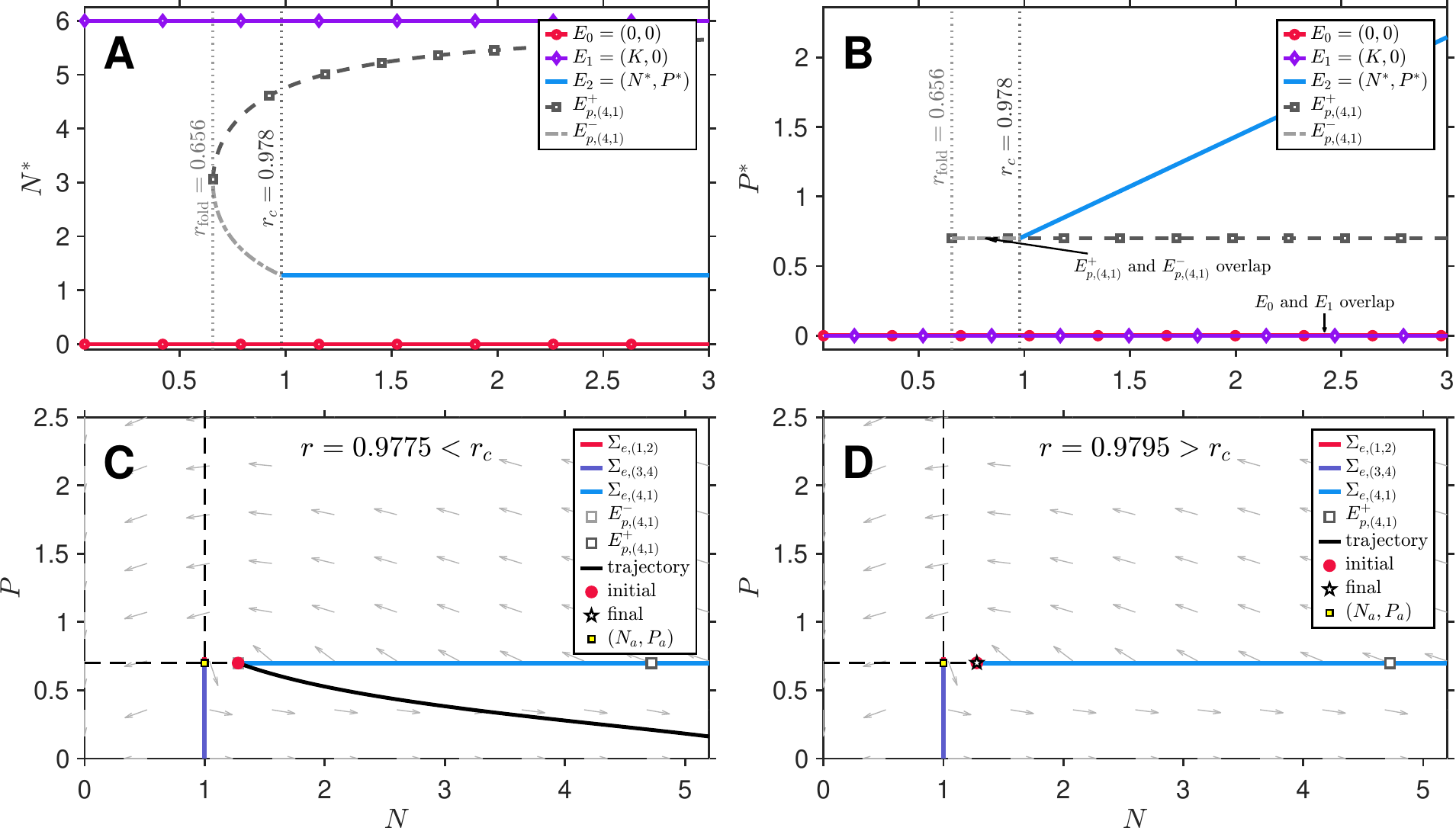}
    \caption{Analytical bifurcation diagram and phase portraits near the critical value $r_c$. (A,B) show the equilibrium and pseudo-equilibrium branches in the $N$- and $P$-components, respectively. The solid branches represent stable equilibria, while dashed branches represent unstable pseudo-equilibria on the escaping sliding segment $\Sigma_{(4,1)}$. (C,D) are phase portraits for $r=r_c-0.001<r_c$ and $r=r_c+0.001>r_c$, respectively, with the initial condition chosen exactly at $E_2(r)$.}
    \label{fig:bifurcation_phase_portraits_rc}
\end{figure}

\subsection{Filippov predator-prey model with Allee effect and a periodically varying growth rate}
The non-autonomous piecewise-smooth system~\eqref{predator_prey_model_varying_r} can be written as
\begin{equation}
\begin{pmatrix}
\dfrac{dN}{dt} \\[6pt]
\dfrac{dP}{dt}
\end{pmatrix}=
\begin{cases}
f_1(t,(N,P)), & \text{for } (N,P) \in R_1 = \{(N,P)\in\mathbb{R}^2_+:N>N_a,\ P>P_a\},\\
f_2(t,(N,P)), & \text{for } (N,P) \in R_2 = \{(N,P)\in\mathbb{R}^2_+:N<N_a,\ P>P_a\},\\
f_3(t,(N,P)), & \text{for } (N,P) \in R_3 = \{(N,P)\in\mathbb{R}^2_+:N<N_a,\ P<P_a\},\\
f_4(t,(N,P)), & \text{for } (N,P) \in R_4 = \{(N,P)\in\mathbb{R}^2_+:N>N_a,\ P<P_a\},
\end{cases}
\label{rewritten_system_nonautonomous}
\end{equation}
where the corresponding time-periodic vector fields are
\begin{equation*}
\begin{aligned}
f_1(t,(N,P))&=
\binom{\rho(t)N\left(1-\frac{N}{K}\right)-aNP}{baNP-mP},
&
f_2(t,(N,P))&=
\binom{-\rho(t)N-aNP}{baNP-mP},
\\[0.8em]
f_3(t,(N,P))&=
\binom{-\rho(t)N}{-mP},
&
f_4(t,(N,P))&=
\binom{\rho(t)N\left(1-\frac{N}{K}\right)}{-mP}.
\end{aligned} 
\end{equation*}

\subsubsection{Criterion of frequency-induced tipping}
\label{F_tipping_criterion}
To identify a transition as frequency-induced, we first exclude transitions that would occur even in the absence of periodic forcing. Let $x^0(t)=(N^0(t),P^0(t))$ denote the solution of the unforced system with $\rho(t)\equiv r_0$ and the same initial condition $(N_0,P_0)\in R_1$. We require this unforced trajectory to remain in the basin of attraction of the coexistence equilibrium $E_2$ and to have no admissible threshold crossing. Equivalently, according to equations~\eqref{eq:sigma-e-1-2} and~\eqref{eq:sigma-e-4-1}, we assume $t_c^0=\infty$ where $t_c^0$ denotes the first admissible crossing time for the unforced system,
\begin{equation*}
t_c^0 =
\inf\left\{t>0:
\begin{array}{l}
N^0(t)=N_a
\ \text{and}\
P^0(t)>\dfrac{r_0}{a}\left(1-\dfrac{N_a}{K}\right), \quad \text{or} \quad P^0(t)=P_a
\ \text{and}\
N_a<N^0(t)<\dfrac{m}{ba}
\end{array}
\right\}. 
\end{equation*}
This baseline condition ensures that any subsequent transition under periodic forcing is not caused by the initial condition being outside the basin of attraction of $E_2$, but by the temporal structure of the forcing itself.

For a fixed forcing amplitude $A$, let $(N_\tau(t),P_\tau(t))$ denote the flow of subsystem $f_1$ under forcing period $\tau$ with the initial condition $(N_0,P_0)$. Define the values (intervals) of forcing periods $\tau$ for which the historical minimum of $N$ and $P$ fall below the Allee thresholds $N_a$ and $P_a$
\begin{equation*}
\mathcal{T}_N(A)=\left\{\tau>0:\min_{t\in[0,\infty)}N_\tau(t)\leq N_a\right\},
\qquad
\mathcal{T}_P(A)=\left\{\tau>0:\min_{t\in[0,\infty)}P_\tau(t)\leq P_a\right\}.
\end{equation*}
A necessary condition for frequency-induced tipping is therefore
\begin{equation}
    \exists\tau_{\mathrm{tip}}\in \mathcal{T}_N(A)\cup\mathcal{T}_P(A).
\end{equation}
This condition identifies forcing periods for which resonance amplifies the population oscillations sufficiently for at least one threshold to be reached. Threshold reaching alone, however, does not guarantee tipping. The Filippov analysis in section~\ref{Filippov_Sliding} shows that all sliding segments of the switching manifolds are escaping. When a trajectory reaches a threshold on an escaping segment, it is repelled from the switching manifold and returns to the original region $R_1$, rather than crossing into a different region. For the time-dependent system, the escaping portion of $\Sigma_{(1,2)}$ is obtained by replacing $r$ with $\rho(t)$:
\begin{equation*}
    \Sigma_{e,(1,2)}(t) = \left\{ (N,P):N=N_a,\;P_a<P<\frac{\rho(t)}{a}\left(1-\frac{N_a}{K}\right)
\right\}.
\end{equation*}
Consequently, crossing from $R_1$ into $R_2$ through the prey threshold $N=N_a$ is possible only when the trajectory reaches $\Sigma_{(1,2)}$ outside $\Sigma_{e,(1,2)}(t)$. In particular, this requires a crossing time $t_c$ such that
\begin{equation*}
    N(t_c)=N_a, \qquad P(t_c)>\frac{\rho(t_c)}{a}\left(1-\frac{N_a}{K}\right).
\end{equation*}
Similarly, the escaping portion of $\Sigma_{(4,1)}$ is independent of $\rho(t)$ and is given by
\begin{equation*}
    \Sigma_{e,(4,1)} = \left\{ (N,P): N>\max\left\{N_a,\frac{m}{ba}\right\},\;P=P_a\right\}.
\end{equation*}
A transition from $R_1$ into $R_4$ through the predator threshold $P=P_a$ can occur only if there exists a crossing time $t_c$ such that
\begin{equation*}
  P(t_c)=P_a, \qquad N_a<N(t_c)<\frac{m}{ba}.  
\end{equation*}

\section*{Author contributions}
S.G.: conceptualization, formal analysis, investigation, methodology, software, visualization, writing-original draft, writing-review and editing; S.P.: methodology, writing-review and editing; H.W.: funding acquisition, writing-review and editing, supervision; M.A.L: funding acquisition, writing-review and editing, supervision.

\section*{Declaration of AI use}
ChatGPT 5 was used to assist in running the simulation code on Digital Research Alliance of Canada (formerly Compute Canada). The resulting simulation data were only used to generate Figure~\ref{fig:figure5}. All AI-assisted work has been carefully checked by the authors, who take full responsibility for the results presented in this study. AI was not used for conceptualization, mathematical analysis, numerical simulations, manuscript writing, or figure creation.

\section*{Conflict of interest declaration}
We declare we have no competing interests. 

\section*{Code accessibility}
Code to reproduce the simulations and figures is available at \url{https://doi.org/10.5281/zenodo.23653919}.
\section*{Funding}
Shan Gao acknowledges funding from the Maud Menten Institute Research Collaboration Accelerator program. Hao Wang's research was partially supported by the Natural Sciences and Engineering Research Council of Canada (Discovery Grant RGPIN-2025-05734) and a Canada Research Chair. Mark A. Lewis gratefully acknowledges the Kennedy Research Chair in Mathematical Biology from the University of Victoria, and the NSERC Discovery Grant program (RGPIN-2024-04792). Funding for this research has also been provided through grants to the TRIA-FoR Project to Mark A. Lewis from Genome Canada (project number 18202) and the Government of Alberta through Genome Alberta (grant number L20TF).

\section*{Acknowledgments}
We would like to thank Lewis Research Group and Interdisciplinary Lab for Mathematical Ecology \& Epidemiology members for providing valuable feedback. 

%%%%%%%%%%%%%%%%%%%%%%%%%%%%%%%%%%%%%%%%%%%%%%%%%%%%%%%%%%%%%%%%%%%%%%%%%%%%%%%%
\newpage
\section*{Supplementary Information}
\addcontentsline{toc}{section}{Supplementary information}

\appendix
\renewcommand{\thesection}{\Alph{section}}
\renewcommand{\thesubsection}{\thesection\arabic{subsection}}
\renewcommand{\thesubsubsection}{\thesubsection.\arabic{subsubsection}}
\setcounter{secnumdepth}{3}

% Supplementary figure/table numbering
\setcounter{figure}{0}
\setcounter{table}{0}
\renewcommand{\thefigure}{S\arabic{figure}}
\renewcommand{\thetable}{S\arabic{table}}
%%%%%%%%%%%%%%%%%%%%%%%%%%%%%%%%%%%%%%%%%%%%%%%%%%%%%%%%%%%%%%%%%%%%%%%%%%%%%%%%

\section{Rescaling the system}
\label{sec: app-resc}

We rewrite the system \eqref{predator_prey_model} in dimensionless variables. For that, let $[N]$ represent the dimension of $N$, and similarly for other parameters and variables of the model. For example, we will use $[K]=[N]$ to state that the dimensions of carrying capacity, $[K]$, equal the dimensions of prey population size, $[N]$. Note that $[a]=[1/(tP)]$ and $[b]=[P/N]$, so that $[bK]=[P]$ and $[abK]=[1/t]$. We therefore rescale our variables as: $n=N/K$, $p=P/(bK)$, and $\tau = abKt$, leading to

\begin{equation}
\begin{pmatrix}
\frac{\text{d}n}{\text{d}t} \\[6pt]
\frac{\text{d}p}{\text{d}t}
\end{pmatrix}
=
\begin{pmatrix}
\tilde{r}n(1-n)\theta(n-n_a)-\tilde{r}n\theta(n_a-n)-np\theta(p-p_a) \\[6pt]
np\theta(p-p_a)-\tilde{m}p
\end{pmatrix},
\label{predator_prey_model_rescaled}
\end{equation}
where $n_a=N_a/K$, $p_a=P_a/(bK)$, $\tilde{r}=r/(abK)$, and $\tilde{m}=m/(abK)$.

The fixed points of model \eqref{predator_prey_model_rescaled} are $(n,p)=(0,0)$ (extinction), $(n,p)=(1,0)$ (prey only) and $(n,p)=(n^*,p^*)=(\tilde{m},\tilde{r}(1-\tilde{m}))$ (coexistence). The eigenvalues of the Jacobian matrix, $\tilde{\lambda}_\pm$, about the coexistence fixed point are:

\begin{equation}
    \tilde{\lambda}_\pm=-\frac{\tilde{r} \tilde{m}}{2}\pm i\tilde\omega,\quad\text{where}\quad \tilde\omega = \sqrt{\tilde{r}\tilde{m}(1-\tilde{m})-\frac{\tilde{r}^2\tilde{m}^2}{4}}.
\end{equation}
Note that, under the assumption that the coexistence fixed point is positive, i.e., $\tilde{r}>0$ and $0<\tilde{m}<1$, the fixed point is always stable. Furthermore, if $\tilde{r}<4(1-\tilde m)/\tilde m$, then the coexistence fixed point is a stable focus, while if the opposite holds, then the the coexistence fixed point is a stable node. In case of equality, $\tilde{r}=4(1-\tilde m)/\tilde m$, the Jacobian has a repeated negative eigenvalue.

Note that the condition $\tilde{r}<4(1-\tilde m)/\tilde m$ imply that the larger is predator mortality, the smaller the prey growth rate must be in order for oscillations to appear. Biologically, this creates the possibility of a feedback between both timescales, fast predator mortality dominates when prey density is low, reducing predator population and allowing prey population growth to take place, despite being slow. Once prey becomes abundantly available again, predator population can grow, and the cycle repeats. However, since prey growth is also self-limited, these cycles are damped over time. Importantly, these dynamics depend only implicitly on $a,b$ and $K$, leading for the same qualitative outcomes for different predator and prey species' compositions. 

\section{A continuous approximation of the Filippov system}\label{continuous_approximation_analysis}
Replacing the Heaviside step function in the predator-prey system~\eqref{predator_prey_model} with the logistic function~\eqref{logistic_fun} gives
\begin{equation}
\begin{pmatrix}
\dfrac{dN}{dt} \\[6pt]
\dfrac{dP}{dt}
\end{pmatrix}
=
\begin{pmatrix}
rN\left(1-\dfrac{N}{K}\right)\gamma(N-N_a)
-rN\gamma(N_a-N)
-aNP\gamma(P-P_a) \\[8pt]
baNP\gamma(P-P_a)-mP
\end{pmatrix}.
\label{predator_prey_model_continuous}
\end{equation}

\subsection{Theoretical analysis}
System \eqref{predator_prey_model_continuous} can be written as
\begin{equation*}
    \frac{dN}{dt}=N F(N,P),
    \qquad
    \frac{dP}{dt}=P G(N,P),
\end{equation*}
where
\begin{equation*}
    F(N,P)=r\left[\left(2-\frac{N}{K}\right)\gamma(N-N_a)-1\right]-aP\gamma(P-P_a),
\end{equation*}
and
\begin{equation*}
    G(N,P)=baN\gamma(P-P_a)-m.
\end{equation*}

The derivatives of the logistic functions are
\begin{equation*}
    \frac{d\gamma(N-N_a)}{dN}=\alpha \gamma(N-N_a)\gamma(N_a-N),
    \qquad
    \frac{d\gamma(P-P_a)}{dP}=\alpha \gamma(P-P_a)\gamma(P_a-P).
\end{equation*}
Hence, the Jacobian matrix is
\begin{equation*}
J(N,P)=
\begin{pmatrix}
F(N,P)+NF_N & N F_P \\[4pt]
P G_N & G(N,P)+P G_P
\end{pmatrix},
\label{continuous_jacobian}
\end{equation*}
where
\begin{align*}
F_N&=r\left[-\frac{\gamma(N-N_a)}{K}+\alpha\left(2-\frac{N}{K}\right) \gamma(N-N_a)\gamma(N_a-N))\right],\\
F_P&=-a\left[\gamma(P-P_a)+\alpha P\gamma(P-P_a)\gamma(P_a-P))\right],\\
G_N&=ba\gamma(P-P_a),\\ 
G_P&=baN\alpha \gamma(P-P_a)\gamma(P_a-P).
\end{align*}

\subsubsection*{Extinction equilibrium}
The extinction equilibrium is $E_0=(0,0)$, with Jacobian:
\begin{equation*}
    J(E_0)=
    \begin{pmatrix}
        r\left[2\gamma(-N_a)-1\right] & 0\\
        0 & -m
    \end{pmatrix}.
\end{equation*}
The eigenvalues are
\begin{equation*}
    \lambda_1=r\left[2\gamma(-N_a)-1\right],
    \qquad
    \lambda_2=-m.
\end{equation*}
For $r,m,\alpha,N_a>0$, we have $\gamma(-N_a)<1/2$, implying that both eigenvalues are negative. $E_0$ is locally asymptotically stable.

\subsubsection*{Prey-only equilibria}
Prey-only equilibria have the form $E_1=(\bar N,0)$, where $\bar N>0$ satisfies
\begin{equation*}
    \left(2-\frac{\bar N}{K}\right)
    \gamma(\bar N-N_a)=1.
    \label{prey_only_equation}
\end{equation*}
Equivalently, we have $1-\frac{\bar N}{K}=e^{-\alpha(\bar N-N_a)}$, which gives $N_a<\bar N<K$ provided $N_a<K$. Define $h(N)=1-\frac{\bar N}{K}-e^{-\alpha(\bar N-N_a)}$ and we have $h''(N)=-\alpha^2e^{-\alpha(N-N_a)}<0$, indicating that the $h(N)$ is strictly concave. We further have $
h(0)=1-e^{\alpha N_a}<0$ and
$h(K)=-e^{-\alpha(K-N_a)}<0$. Hence, if its maximum occurs at a positive value of $N$, we have (a) A positive maximum gives two positive crossings; (b) A zero maximum gives one tangential, double root; or (c) A negative maximum gives no roots. For such an equilibrium, the Jacobian is triangular, and its eigenvalues are
\begin{equation*}
\lambda_N=\bar N r\left[-\frac{\gamma(\bar N-N_a)}{K}+\alpha\left(1-\gamma(\bar N-N_a)\right)\right],
\end{equation*}
and
\begin{equation*}
    \lambda_P=ba\bar N\gamma(-P_a)-m.
\end{equation*}
Thus, a prey-only equilibrium is locally asymptotically stable if both $\lambda_N<0$ and $\lambda_P<0$.

\subsubsection*{Coexistence equilibria}
Coexistence equilibria have the form
\begin{equation*}
    E_2=(N^*,P^*),
    \qquad N^*>0,\quad P^*>0.
\end{equation*}
From the predator-nullcline condition,
\begin{equation*}
    baN^*\gamma(P^*-P_a)=m,
\end{equation*}
we obtain
\begin{equation}
    P^*=P_a+\frac{1}{\alpha}\log\left(\frac{m}{baN^*-m}\right),
    \qquad N^*>\frac{m}{ba}.
    \label{coexistence_Pstar}
\end{equation}
Requiring $P^*>0$ further gives
\begin{equation*}
    \frac{m}{ba} < N^* < \frac{m}{ba} \left( 1 + e^{\alpha P_a} \right) = \frac{m}{ba \gamma(-P_a)}.
\end{equation*}
Substitution into the prey-nullcline condition yields the scalar equation
\begin{equation}
r\left[\left(2-\frac{N^*}{K}\right)\gamma(N^*-N_a)-1\right]=\frac{m}{bN^*}\left[P_a+\frac{1}{\alpha}\log\left(\frac{m}{baN^*-m}\right)\right].
\label{coexistence_Nstar}
\end{equation}
Hence, coexistence equilibria are obtained by solving~\eqref{coexistence_Nstar} numerically for $N^*$ and then using~\eqref{coexistence_Pstar} to determine $P^*$.

At a coexistence equilibrium, $F(N^*,P^*)=G(N^*,P^*)=0$, and thus
\begin{equation*}
J(E_2)=
\begin{pmatrix}
N^*F_N & N^*F_P\\
P^*G_N & P^*G_P
\end{pmatrix}_{(N^*,P^*)}.
\end{equation*}
The coexistence equilibrium is locally asymptotically stable if
\begin{equation*}
    \operatorname{tr}\bigl(J(E_2)\bigr)<0,
    \qquad
    \det\bigl(J(E_2)\bigr)>0.
\end{equation*}

\subsubsection*{Equilibria and stability for the chosen parameter set}
For the following parameter set:
\begin{equation}\label{parameter_continuous}
r=1.2,\quad K=6.0,\quad a=1.1,\quad b=0.32,\quad
m=0.45,\quad N_a=1.0,\quad P_a=0.7,\quad \alpha=30,
\end{equation}
the smooth system admits five equilibria: one extinction equilibrium, two prey-only equilibria, and two coexistence equilibria. Their locations, eigenvalues, and corresponding local stability properties are summarized in the table below.
\begin{table}[ht]
\centering
\begin{tabular}{lcl}
\toprule
\text{Equilibrium} & \text{Eigenvalues} & \text{Local stability}\\
\midrule
$E_0=(0,0)$
& $-1.200000,\,-0.450000$
& stable node\\[3pt]
$E_1^{(1)}\approx(1.006118,0)$
& $16.342938,\,-0.450000$
& saddle\\[3pt]
$E_1^{(2)}\approx(6.000000,0)$
& $-1.200000,\,-0.450000$
& stable node\\[3pt]
$E_2^{(1)}\approx(1.288070,0.862842)$
& $-0.077797\pm0.692044\,\mathrm{i}$
& stable focus\\[3pt]
$E_2^{(2)}\approx(5.081743,0.663658)$
& $-0.842241,\,6.531372$
& saddle\\
\bottomrule
\end{tabular}
\end{table}

Therefore, for this parameter set, the smooth system has three locally asymptotically stable equilibria: the extinction equilibrium $E_0$, the high-prey prey-only equilibrium $E_1^{(2)}$, and the coexistence equilibrium $E_2^{(1)}$. The natural period of $E_2^{(1)}$ is $\frac{2\pi}{0.692044}\approx 9.08$.

\section{Proofs of the subsystem stability results.}
\label{S:subsystems_dynamics}
\subsection{Dynamics of subsystem 1}
\label{S:subsystems_dynamics1}
\begin{proof}[Proof of Lemma~\ref{Lemma_R1}]
In region $R_1$, i.e. $N>N_a,\ P>P_a$, the system~\ref{predator_prey_model} becomes
\begin{equation}
\left\{
\begin{aligned}
\dfrac{dN}{dt} &= rN\left(1-\frac{N}{K}\right)-aNP,\\
\dfrac{dP}{dt} &= P(baN-m),
\end{aligned}
\right.
\label{subsystem1}
\end{equation}
Let $\frac{dN}{dt}$=$\frac{dP}{dt}$=0, we can immediately have the extinction equilibrium $E_0=(0,0)$ and prey-only equilibrium $E_1=(K,0)$. Also, the coexistence equilibrium is $E_2=\left(\frac{m}{ab},\ \frac{r}{a}\left(1-\frac{m}{baK}\right)\right)$, provided $abK>m$.

The Jacobian matrix is
\begin{equation*}
    J(N,P)=\begin{pmatrix}
    r-\frac{2rN}{K}-aP & -aN\\
    baP & baN-m
    \end{pmatrix}.
\end{equation*}

Therefore, we can compute $J(E_0)$, $J(E_1)$, and $J(E_2)$, as shown below.
\begin{equation*}
J(E_0)=
\begin{pmatrix}
r & 0\\
0 & -m
\end{pmatrix},
\qquad
J(E_1)=
\begin{pmatrix}
-r & -aK\\
0 & baK-m
\end{pmatrix},
\qquad
J(E_2)=
\begin{pmatrix}
-\dfrac{rm}{baK} & -\dfrac{m}{b}\\[10pt]
br\left(1-\dfrac{m}{baK}\right) & 0
\end{pmatrix}.
\end{equation*}
At $E_0=(0,0)$, the eigenvalues are $\lambda_1=r>0$ and $\lambda_2=-m<0$. Therefore, $E_0=(0,0)$ is a saddle point. At $E_1=(K,0)$, the eigenvalues are $\lambda_1=-r<0$ and $\lambda_2=baK-m$. The local stability of $E_1$ depends on the sign of $baK-m$:
\begin{equation*}
E_1=(K,0)
\begin{cases}
\text{is locally asymptotically stable}, & baK<m,\\[2mm]
\text{is nonhyperbolic}, & baK=m,\\[2mm]
\text{is a saddle point}, & baK>m.
\end{cases}
\end{equation*}
At $E_2$, the trace and determinant of $J(E_2)$ are $\operatorname{tr}J(E_2)=-\frac{rm}{baK}<0$, and $\det J(E_2)=mr\left(1-\frac{m}{baK}\right)$. Whenever the coexistence equilibrium exists, we have $\frac{m}{baK}<1$, so $\det J(E_2)>0$. Therefore, $E_2$ is locally asymptotically stable if $baK>m$. Additionally, we can compute the eigenvalues
\begin{equation}
\lambda_{\pm} = -\frac{rm}{2abK} \pm i\omega, \qquad \text{where }\omega = \sqrt{mr\left(1-\frac{m}{baK}\right)-\frac{r^2m^2}{4a^2b^2K^2}}.
\end{equation}
\end{proof}

\begin{proof}[Proof of Theorem~\ref{Theorem_R1}]
First, the nonnegative quadrant $\mathbb{R}_+^2$ is positively invariant, because we have
\begin{equation}
\begin{aligned}
N(t)&=N(0)\exp\left\{\int_0^t
\left[r\left(1-\frac{N(s)}{K}\right)-aP(s)\right]\,ds\right\}>0,\\
P(t)&=P(0)\exp\left\{\int_0^t (baN(s)-m)\,ds\right\}>0,
\end{aligned}
\end{equation}
for all $t>0$ when $N(0)>0$ and $P(0)>0$. Moreover, the coordinate axes are invariant. Hence $\mathbb{R}_+^2$ is positively invariant.

Assume $baK>m$, then the coexistence equilibrium $E_2$ lies in $\mathbb{R}_{+}^2$. Define the Lyapunov function
\begin{equation*}
   V(N,P)=b\left(N-N^*-N^*\ln\frac{N}{N^*}\right)+\left(P-P^*-P^*\ln\frac{P}{P^*}\right). 
\end{equation*}

Since $x-1-\ln x\ge 0$ for all $x>0$, with equality if and only if $x=1$, it follows that $V(N,P)\ge 0$, and $V(N,P)=0$ if and only if $(N,P)=(N^*,P^*)$. Differentiating $V$ along solutions of system~\eqref{subsystem1} gives
\begin{equation*}
    \begin{aligned}
    \dot V &= b\left(1-\frac{N^*}{N}\right)\dot N + \left(1-\frac{P^*}{P}\right)\dot P \\[2mm]
    &=b(N-N^*)\left[r\left(1-\frac{N}{K}\right)-aP\right] + (P-P^*)(baN-m).
\end{aligned}
\end{equation*}
Using the equilibrium relations
\begin{equation*}
    r\left(1-\frac{N^*}{K}\right)-aP^*=0,\qquad baN^*-m=0,
\end{equation*}
we have
\begin{equation}
\begin{aligned}
r\left(1-\frac{N}{K}\right)-aP&=-\frac{r}{K}(N-N^*)-a(P-P^*),\\
baN-m&=ba(N-N^*).
\end{aligned}
\end{equation}
Therefore, $V$ is nonincreasing along solutions since
\begin{equation*}
    \begin{aligned}
    \dot V = b(N-N^*)\left[-\frac{r}{K}(N-N^*)-a(P-P^*)\right]+(P-P^*)ba(N-N^*) = -\frac{br}{K}(N-N^*)^2\le 0.
\end{aligned}
\end{equation*}
Moreover, the sublevel sets of $V$ are compact in $\mathbb{R}_{+}^2$, since $V(N,P)\to\infty$ as $N\to0^+$, $P\to0^+$, $N\to\infty$, or $P\to\infty$. Each positive solution is contained in a compact positively invariant subset of $\mathbb{R}_{+}^2$. The set where $\dot V=0$ is given by $N=N^*$. On this set,
\begin{equation*}
    \dot N=N^*\left[r\left(1-\frac{N^*}{K}\right)-aP\right] = aN^*(P^*-P).
\end{equation*}
Thus a trajectory can remain in $\{N=N^*\}$ only if $P=P^*$. Therefore, the largest invariant set contained in $\{\dot V=0\}$ is $\{E_2\}$. By LaSalle's invariance principle~\cite{lasalle1960some}, every solution with initial condition $(N(0),P(0))\in\mathbb{R}_{+}^2$ converges to $E_2$. Hence $E_2$ is globally asymptotically stable in $\mathbb{R}_{+}^2$.
\end{proof}

\subsection{Dynamics of subsystem 2}
\begin{proof}[Proof of Lemma~\ref{Lemma_R2}]
In region $R_2$, i.e. $N<N_a,\ P>P_a$, the system~\eqref{predator_prey_model} becomes
\begin{equation}
\left\{
\begin{aligned}
\dfrac{dN}{dt} &= -rN-aNP,\\
\dfrac{dP}{dt} &= P(baN-m),
\end{aligned}
\right.
\label{subsystem2}
\end{equation}
The equilibria are $E_0=(0,0)$ and $E_1=\left(\frac{m}{ba},-\frac{r}{a}\right)$. Because $E_1$ has a negative $P$-component, it is not biologically meaningful in the first quadrant. Thus, we consider only the equilibrium $E_0=(0,0)$. The Jacobian matrix is
\begin{equation*}
    J(N,P)=\begin{pmatrix}
    -r-aP & -aN\\
    baP & baN-m
\end{pmatrix}.
\end{equation*}
Evaluating the Jacobian at the equilibria gives
\begin{equation*}
    J(E_0)=\begin{pmatrix}
    -r & 0\\
    0 & -m
\end{pmatrix}.
\end{equation*}
$E_0=(0,0)$ is locally asymptotically stable since the eigenvalues are $\lambda_1=-r<0$ and $\lambda_2=-m<0$.   
\end{proof}

\begin{proof}[Proof of Theorem~\ref{Theorem_R2}]
The nonnegative quadrant $\mathbb{R}_+^2$ is positively invariant since both equations contain the factors $N$ and $P$. Moreover, because $P(t)\ge 0$, we have
\begin{equation*}
\dot N=-N(r+aP)\le -rN.
\end{equation*}
Hence $0\le N(t)\le N(0)e^{-rt}$, and therefore $N(t)\to 0$ as
$t\to\infty$. Next,
\begin{equation*}
P(t)=P(0)\exp\left\{\int_0^t(baN(s)-m)\,ds\right\}.
\end{equation*}
Since $N(t)\le N(0)e^{-rt}$, we have
\begin{equation*}
\int_0^t N(s)\,ds\le \frac{N(0)}{r}.
\end{equation*}
Thus,
\begin{equation*}
P(t)\le P(0)\exp\left\{\frac{baN(0)}{r}-mt\right\},
\end{equation*}
which implies $P(t)\to 0$ as $t\to\infty$. Therefore every solution in $\mathbb{R}_+^2$ converges to $E_0=(0,0)$. Finally, since $E_0$ is locally asymptotically stable, all solutions in $\mathbb{R}_+^2$ converge to $E_0$, it is globally asymptotically stable in $\mathbb{R}_+^2$.
\end{proof}

\subsection{Dynamics of subsystem 3}
\begin{proof}[Proof of Lemma~\ref{Lemma_R3}]
In region $R_3$, i.e. $N<N_a,\ P<P_a$, the system~\ref{predator_prey_model} becomes
\begin{equation}
\left\{
\begin{aligned}
\dfrac{dN}{dt} &= -rN,\\
\dfrac{dP}{dt} &= -mP,
\end{aligned}
\right.
\label{subsystem3}
\end{equation}
In this case, we only have  extinction equilibrium $E_0=(0,0)$. Near $E_0$, we are in the region $N<N_a$, $P<P_a$, so the Jacobian matrix at $E_0$ is provided below, with eigenvalues $\lambda_1=-r<0$ and $\lambda_2=-m<0$. Since both are negative, $E_0$ is locally asymptotically stable for all $r>0$.
\begin{equation}
J(E_0)=
\begin{pmatrix}
-r & 0\\
0 & -m
\end{pmatrix},
\end{equation}
\end{proof}
\begin{proof}[Proof of Theorem~\ref{Theorem_R3}]
The nonnegative quadrant $\mathbb{R}_+^2$ is positively invariant since the coordinate axes are invariant. Solving the system explicitly gives
\begin{equation*}
    N(t)=N(0)e^{-rt},\qquad P(t)=P(0)e^{-mt}.
\end{equation*}
Thus, for any initial condition $(N(0),P(0))\in\mathbb{R}_+^2$, we have
$N(t)\ge 0$ and $P(t)\ge 0$ for all $t\ge 0$, and
\begin{equation*}
\lim_{t\to\infty}N(t)=0,
\qquad
\end{equation*}
Therefore every solution in $\mathbb{R}_+^2$ converges to $E_0=(0,0)$. Moreover, since $E_0$ is locally asymptotically stable, $E_0=(0,0)$ is globally asymptotically stable in $\mathbb{R}_+^2$.
\end{proof}

\subsection{Dynamics of subsystem 4}
\begin{proof}[Proof of Lemma~\ref{Lemma_R4}]
In region $R_4$, i.e. $N>N_a,\ P<P_a$, the system~\ref{predator_prey_model} becomes
\begin{equation}
\left\{
\begin{aligned}
\dfrac{dN}{dt} &= rN\left(1-\frac{N}{K}\right),\\
\dfrac{dP}{dt} &= -mP,
\end{aligned}
\right.
\label{subsystem4}
\end{equation}
The equilibria are the extinction equilibrium $E_0=(0,0)$ and the prey-only equilibrium $E_1=(K,0)$. The Jacobian matrix is
\begin{equation*}
J(N,P)=
\begin{pmatrix}
r-\dfrac{2rN}{K} & 0\\
0 & -m
\end{pmatrix}.
\end{equation*}
Evaluating the Jacobian at the equilibria gives
\begin{equation*}
J(E_0)=
\begin{pmatrix}
r & 0\\
0 & -m
\end{pmatrix},
\qquad
J(E_1)=
\begin{pmatrix}
-r & 0\\
0 & -m
\end{pmatrix}.
\end{equation*}
At $E_0=(0,0)$, the eigenvalues are $\lambda_1=r>0$ and $\lambda_2=-m<0$. $E_0=(0,0)$ is a saddle point. At $E_1=(K,0)$, the eigenvalues are $\lambda_1=-r<0$ and $\lambda_2=-m<0$. $E_1=(K,0)$ is locally asymptotically stable.
\end{proof}

\begin{proof}[Proof of Theorem~\ref{Theorem_R4}]
The nonnegative quadrant $\mathbb{R}_+^2$ is positively invariant. The second equation gives
\begin{equation*}
P(t)=P(0)e^{-mt},
\end{equation*}
so $P(t)\to 0$ as $t\to\infty$. For $N(0)>0$, the first equation's solution satisfies
\begin{equation*}
N(t)=\frac{K N(0)}{N(0)+(K-N(0))e^{-rt}},
\end{equation*}
and therefore $\lim_{t\to\infty}N(t)=K$. Hence every solution with $N(0)>0$ and $P(0)\ge 0$ satisfies $\lim_{t\to\infty}(N(t),P(t))=(K,0)$. Based on Jacobian matrices at $E_0$ and $E_1$, we have $E_0$ is unstable and $E_1$ is locally asymptotically stable. Together with the global convergence above, $E_1=(K,0)$ is globally asymptotically stable for $N(0)>0$ and $P(0)\ge 0$.
\end{proof}

\section{Sliding mode dynamics}
\label{S:sliding_dynamics}
In a piecewise-smooth system, a sliding mode is the motion that occurs when a trajectory reaches a switching boundary and then moves along the boundary instead of crossing it~\cite{bernardo2008piecewise}. The vector fields on either side determine whether trajectories cross the boundary, move away from it, or stay on it and slide.

\subsection{Sliding dynamics on \texorpdfstring{$\Sigma_{(1,2)}$}{Sigma12}}
The manifold $\Sigma_{(1,2)}$ is a discontinuity surface separating the two vector fields $f_1$ and $f_2$. We can compute
\begin{equation*}
\left.\langle f_1,n_1\rangle\right|_{(N,P)\in\Sigma_{(1,2)}}
=
rN_a\left(1-\frac{N_a}{K}\right)-aN_aP,
\qquad
\left.\langle f_2,n_1\rangle\right|_{(N,P)\in\Sigma_{(1,2)}}
=
-rN_a-aN_aP.
\end{equation*}
Since $r>0$, $a>0$, $N_a>0$, and $P>P_a>0$, we have
$\left.\langle f_2,n_1\rangle\right|_{\Sigma_{(1,2)}}<0$. Hence, there is no attracting sliding segment on $\Sigma_{(1,2)}$. On the other hand, $\Sigma_{(1,2)}$ contains an escaping sliding segment
\begin{equation*}
\Sigma_{e,(1,2)}=
\left\{(N,P)\in\mathbb{R}_+^2:
N=N_a,\ 
P_a<P<\frac{r}{a}\left(1-\frac{N_a}{K}\right)
\right\}.
\end{equation*}
Naturally, $\Sigma_{e,(1,2)}$ is nonempty if
$P_a<\frac{r}{a}\left(1-\frac{N_a}{K}\right)$. In this case, the vectors for the two different structures of system~\eqref{rewritten_system} are directed away from each other.

According to Filippov's convex method~\cite{filippov2013differential}, the escaping sliding mode dynamics along $\Sigma_{e,(1,2)}$ of system~\eqref{rewritten_system} is
\begin{equation}
\begin{aligned}
\begin{pmatrix}
\dfrac{dN}{dt} \\[6pt]
\dfrac{dP}{dt}
\end{pmatrix}
&= f_{e,(1,2)}(N,P)=\lambda f_1+(1-\lambda)f_2,
\qquad (N,P)\in\Sigma_{e,(1,2)},\\
&=
\frac{K(r+aP)}{r(2K-N_a)}
\begin{pmatrix}
rN_a\left(1-\dfrac{N_a}{K}\right)-aN_aP\\
baN_aP-mP
\end{pmatrix}
+
\frac{K\left[r\left(1-\dfrac{N_a}{K}\right)-aP\right]}{r(2K-N_a)}
\begin{pmatrix}
-rN_a-aN_aP\\
baN_aP-mP
\end{pmatrix}\\
&=
\begin{pmatrix}
0\\
baN_aP-mP
\end{pmatrix}.
\end{aligned}
\label{sliding1}
\end{equation}
where
\begin{equation*}
    \lambda=\frac{\langle f_2,n_1\rangle}{\langle f_2,n_1\rangle-\langle f_1,n_1\rangle}=\frac{K(r+aP)}{r(2K-N_a)}.
\end{equation*}
The value of $\lambda$ is chosen so that the resulting vector field is tangent to the switching surface. Hence, there is no positive isolated pseudo-equilibrium on $\Sigma_{e,(1,2)}$ unless $N_a=\frac{m}{ba}$, in which case every point of $\Sigma_{e,(1,2)}$ is an equilibrium of the escaping sliding dynamics.

\subsection{Sliding dynamics on \texorpdfstring{$\Sigma_{(2,3)}$}{Sigma23}}
The manifold $\Sigma_{(2,3)}$ is a discontinuity surface separating the two vector fields $f_2$ and $f_3$. We can compute
\begin{equation*}
    \left.\langle f_2,n_2\rangle\right|_{(N,P)\in\Sigma_{(2,3)}}= baNP_a-mP_a, \qquad \left.\langle f_3,n_2\rangle\right|_{(N,P)\in\Sigma_{(2,3)}} = -mP_a.
\end{equation*}
Since $m>0$ and $P_a>0$, we have $\left.\langle f_3,n_2\rangle\right|_{\Sigma_{(2,3)}}=-mP_a<0$. There is no attracting sliding segment on $\Sigma_{(2,3)}$. However, $\Sigma_{(2,3)}$ contains an escaping sliding segment
\begin{equation*}
    \Sigma_{e,(2,3)} = \left\{(N,P)\in\mathbb{R}_+^2: \frac{m}{ba}<N<N_a,\ P=P_a \right\}.
\end{equation*}
$\Sigma_{e,(2,3)}$ is nonempty if $N_a>\frac{m}{ba}$. In this case, the vectors for the two different structures of system~\eqref{rewritten_system} are directed away from each other. $\Sigma_{(2,3)}$ contains an escaping sliding segment.

According to Filippov's convex method~\cite{filippov2013differential}, the escaping sliding mode dynamics along $\Sigma_{e,(2,3)}$ of system~\eqref{rewritten_system} is
\begin{equation}
\begin{aligned}
\begin{pmatrix}
\dfrac{dN}{dt} \\[6pt]
\dfrac{dP}{dt}
\end{pmatrix}
&= f_{e,(2,3)}(N,P)=\lambda f_2+(1-\lambda)f_3,
\qquad (N,P)\in\Sigma_{e,(2,3)},\\
&=
\frac{m}{baN}
\begin{pmatrix}
-rN-aNP_a\\
baNP_a-mP_a
\end{pmatrix}
+
\frac{baN-m}{baN}
\begin{pmatrix}
-rN\\
-mP_a
\end{pmatrix}\\
&=
\begin{pmatrix}
-rN-\dfrac{mP_a}{b}\\
0
\end{pmatrix}.
\end{aligned}
\label{sliding2}
\end{equation}
where
\begin{equation*}
    \lambda = \frac{\langle f_3,n_2\rangle} {\langle f_3,n_2\rangle-\langle f_2,n_2\rangle} = \frac{m}{baN}.
\end{equation*}
Since $r>0$, $m>0$, $b>0$, $N>0$, and $P_a>0$, we have $-rN-\frac{mP_a}{b}<0$. System~\eqref{sliding2} has no positive pseudo-equilibrium on $\Sigma_{e,(2,3)}$.

\subsection{Sliding dynamics on \texorpdfstring{$\Sigma_{(3,4)}$}{Sigma34}}
The manifold $\Sigma_{(3,4)}$ is a discontinuity surface between the two vector fields $f_3$ and $f_4$. We compute
\begin{equation*}
    \left.\langle f_4,n_1\rangle\right|_{(N,P)\in\Sigma_{(3,4)}} = rN_a\left(1-\frac{N_a}{K}\right), \qquad \left.\langle f_3,n_1\rangle\right|_{(N,P)\in\Sigma_{(3,4)}} = -rN_a<0.
\end{equation*}
Hence, there is no attracting sliding segment on $\Sigma_{(3,4)}$. On the other hand, if $N_a<K$, $\Sigma_{(3,4)}$ contains an escaping sliding segment given by
\begin{equation*}
    \Sigma_{e,(3,4)} = \left\{(N,P)\in\mathbb{R}_+^2: N=N_a,\ 0<P<P_a,\ N_a<K \right\}.
\end{equation*}
According to Filippov's convex method~\cite{filippov2013differential}, the escaping sliding mode dynamics along $\Sigma_{e,(3,4)}$ of system~\eqref{rewritten_system} is
\begin{equation}
\begin{aligned}
\begin{pmatrix}
\dfrac{dN}{dt} \\[6pt]
\dfrac{dP}{dt}
\end{pmatrix}
&= f_{e,(3,4)}(N,P)
= \lambda f_4 + (1-\lambda)f_3,
\qquad (N,P)\in \Sigma_{e,(3,4)},\\
&= \frac{K}{2K-N_a}
\begin{pmatrix}
rN_a\left(1-\dfrac{N_a}{K}\right) \\
-mP
\end{pmatrix}
+
\frac{K-N_a}{2K-N_a}
\begin{pmatrix}
-rN_a \\
-mP
\end{pmatrix}\\
&= \begin{pmatrix}
\dfrac{K rN_a\left(1-\frac{N_a}{K}\right)-rN_a(K-N_a)}
{2K-N_a} \\[1em]
\dfrac{-KmP-(K-N_a)mP}{2K-N_a}
\end{pmatrix}\\
&= \begin{pmatrix}
0 \\
-mP
\end{pmatrix}.
\end{aligned}
\label{sliding3}
\end{equation}
where
\begin{equation*}
    \lambda = \frac{\langle f_3,n_1\rangle} {\langle f_3,n_1\rangle-\langle f_4,n_1\rangle} = \frac{K}{2K-N_a}.
\end{equation*}
There is no positive pseudo-equilibrium on $\Sigma_{e,(3,4)}$, since $-mP=0$ implies $P=0$, whereas $0<P<P_a$ on $\Sigma_{e,(3,4)}$.

\subsection{Sliding dynamics on \texorpdfstring{$\Sigma_{(4,1)}$}{Sigma41}}
The manifold $\Sigma_{(4,1)}$ is a discontinuity surface separating the two vector fields $f_1$ and $f_4$. We can compute
\begin{equation*}
    \left.\langle f_1,n_2\rangle\right|_{(N,P)\in\Sigma_{(4,1)}} = baNP_a-mP_a, \qquad \left.\langle f_4,n_2\rangle\right|_{(N,P)\in\Sigma_{(4,1)}} = -mP_a<0.
\end{equation*}
There is again no attracting sliding segment on $\Sigma_{(4,1)}$. However, $\Sigma_{(4,1)}$ contains an escaping sliding segment
\begin{equation*}
    \Sigma_{e,(4,1)} = \left\{(N,P)\in\mathbb{R}_+^2: N>\max\left\{N_a,\frac{m}{ba}\right\},\ P=P_a \right\}.
\end{equation*}
According to Filippov's convex method~\cite{filippov2013differential}, the escaping sliding mode dynamics along $\Sigma_{e,(4,1)}$ of system~\eqref{rewritten_system} is
\begin{equation}
\begin{aligned}
\begin{pmatrix}
\dfrac{dN}{dt} \\[6pt]
\dfrac{dP}{dt}
\end{pmatrix}
&= f_{e,(4,1)}(N,P)=\lambda f_1+(1-\lambda)f_4,
\qquad (N,P)\in\Sigma_{e,(4,1)},\\
&=
\frac{m}{baN}
\begin{pmatrix}
rN\left(1-\dfrac{N}{K}\right)-aNP_a\\
baNP_a-mP_a
\end{pmatrix}
+
\frac{baN-m}{baN}
\begin{pmatrix}
rN\left(1-\dfrac{N}{K}\right)\\
-mP_a
\end{pmatrix}\\
&=
\begin{pmatrix}
rN\left(1-\dfrac{N}{K}\right)-\dfrac{mP_a}{b}\\
0
\end{pmatrix}.
\end{aligned}
\label{sliding4}
\end{equation}
where
\begin{equation*}
    \lambda = \frac{\langle f_4,n_2\rangle} {\langle f_4,n_2\rangle-\langle f_1,n_2\rangle} = \frac{m}{baN}.
\end{equation*}
Then system~\eqref{sliding4} has pseudo-equilibria satisfying
\begin{equation*}
    rN\left(1-\frac{N}{K}\right)-\frac{mP_a}{b}=0.
\end{equation*}
Thus, the candidate pseudo-equilibria are
\begin{equation*}
    E_{p4}^{\pm} = \left(\frac{K\pm\sqrt{K^2-\frac{4KmP_a}{br}}}{2}, P_a \right), \qquad \text{if }  K^2-\frac{4KmP_a}{br}\geq 0.
\end{equation*}
These are pseudo-equilibria on $\Sigma_{e,(4,1)}$ provided that their $N$-coordinates satisfy
\begin{equation*}
    \frac{K\pm\sqrt{K^2-\frac{4KmP_a}{br}}}{2} > \max\left\{N_a,\frac{m}{ba}\right\}.
\end{equation*}

\subsection{Phase portrait showing the dynamics when flows reach a switching boundary}
Figure~\ref{fig:FigureS1} illustrates this behaviour for two representative threshold configurations. The coloured portions of the switching manifolds indicate escaping sliding segments, while the markers denote admissible pseudo-equilibria when they exist. In panel A, $\Sigma_{e,(1,2)}$, $\Sigma_{e,(3,4)}$, and $\Sigma_{e,(4,1)}$ are present. the degenerate condition $N_a=\frac{m}{ba}$ produces a continuum of pseudo-equilibria on $\Sigma_{e,(1,2)}$, together with an isolated pseudo-equilibrium on $\Sigma_{e,(4,1)}$. In panel B, $\Sigma_{e,(2,3)}$ is empty because $N_a<\frac{m}{ba}$. The “gate” on $\Sigma_{e,(4,1)}$ allows trajectories to enter $R_4$ from $R_1$ and subsequently approach $E_1$. For both cases, nearby trajectories are repelled from the escaping segments and subsequently follow the smooth dynamics of the region they enter. Their long-term behaviour is therefore determined by the attracting state of the region, such as extinction, prey-only persistence, or coexistence. Figure~\ref{fig:FigureS2} extends this comparison across a broader range of $N_a$ and $P_a$, showing how the escaping segments and pseudo-equilibria vary across threshold configurations.

\begin{figure}[ht]
    \centering
    \includegraphics[width=0.8\linewidth]{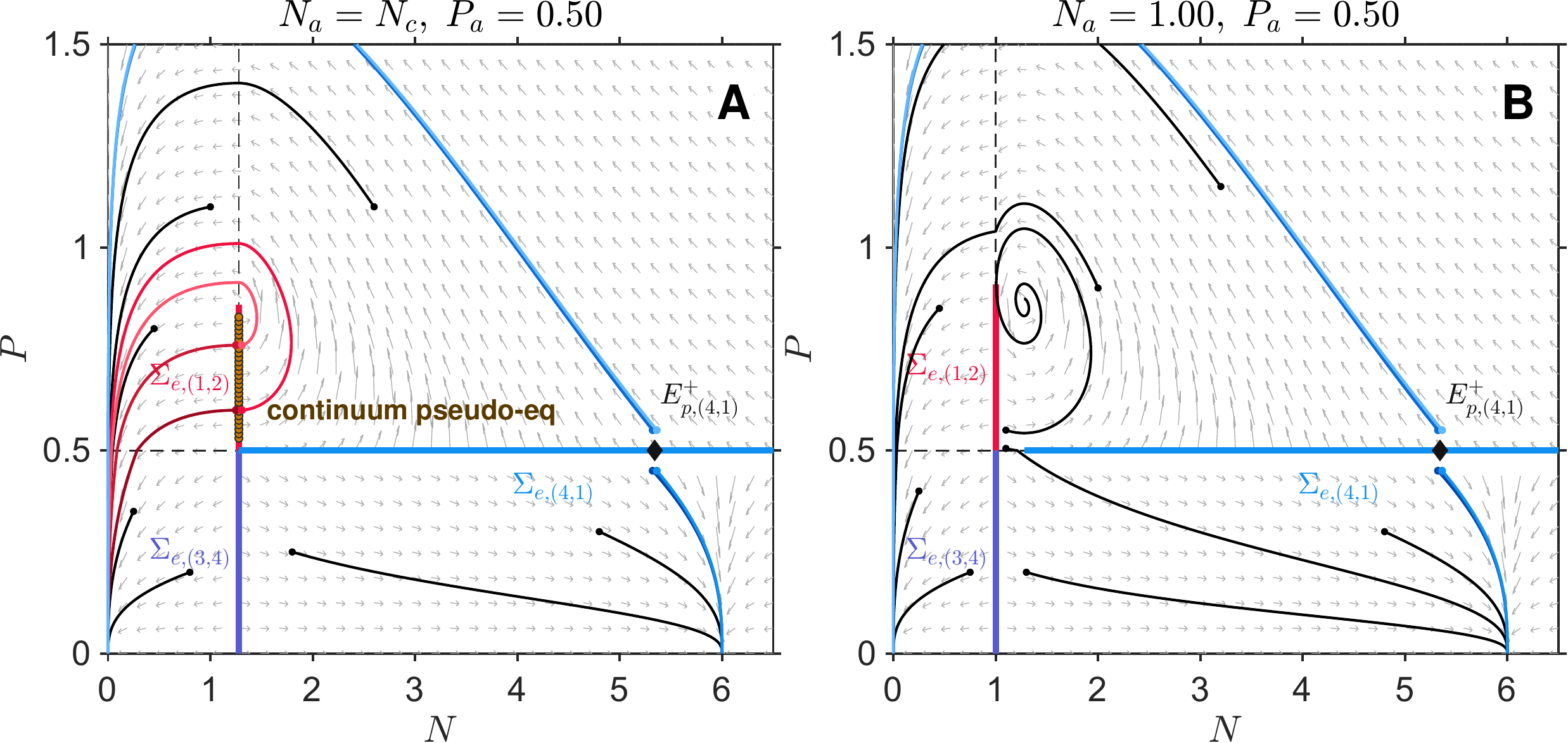}
    \caption{Phase portraits of system~\ref{predator_prey_model} under two Allee-threshold configurations: (A) $N_a=N_c=m/(ba)\approx1.278$, $P_a=0.50$; and (B) $N_a=1.00$, $P_a=0.50$. Dashed vertical and horizontal lines denote the switching thresholds $N=N_a$ and $P=P_a$, respectively, while coloured segments indicate the escaping sliding regions $\Sigma_{e,(1,2)}$, $\Sigma_{e,(3,4)}$, and $\Sigma_{e,(4,1)}$ when they are admissible. The filled diamond denotes the admissible pseudo-equilibrium $E_{p,(4,1)}^{+}$. In panel A, where $N_a=N_c$, the points along $\Sigma_{e,(1,2)}$ form a continuum of pseudo-equilibria. Black curves show representative trajectories, and coloured trajectories illustrate the local dynamics near the continuum pseudo-equilibria and $E_{p,(4,1)}^{+}$. Gray arrows indicate the background vector field. Other parameters are $r=1.2$, $K=6.0$, $a=1.1$, $b=0.32$, and $m=0.45$.}
    \label{fig:FigureS1}
\end{figure}

\subsection{Example of the sliding mode under different \texorpdfstring{$(N_a,P_a)$}{NaPa}}
\begin{figure}[ht]
    \centering
    \includegraphics[width=\textwidth]{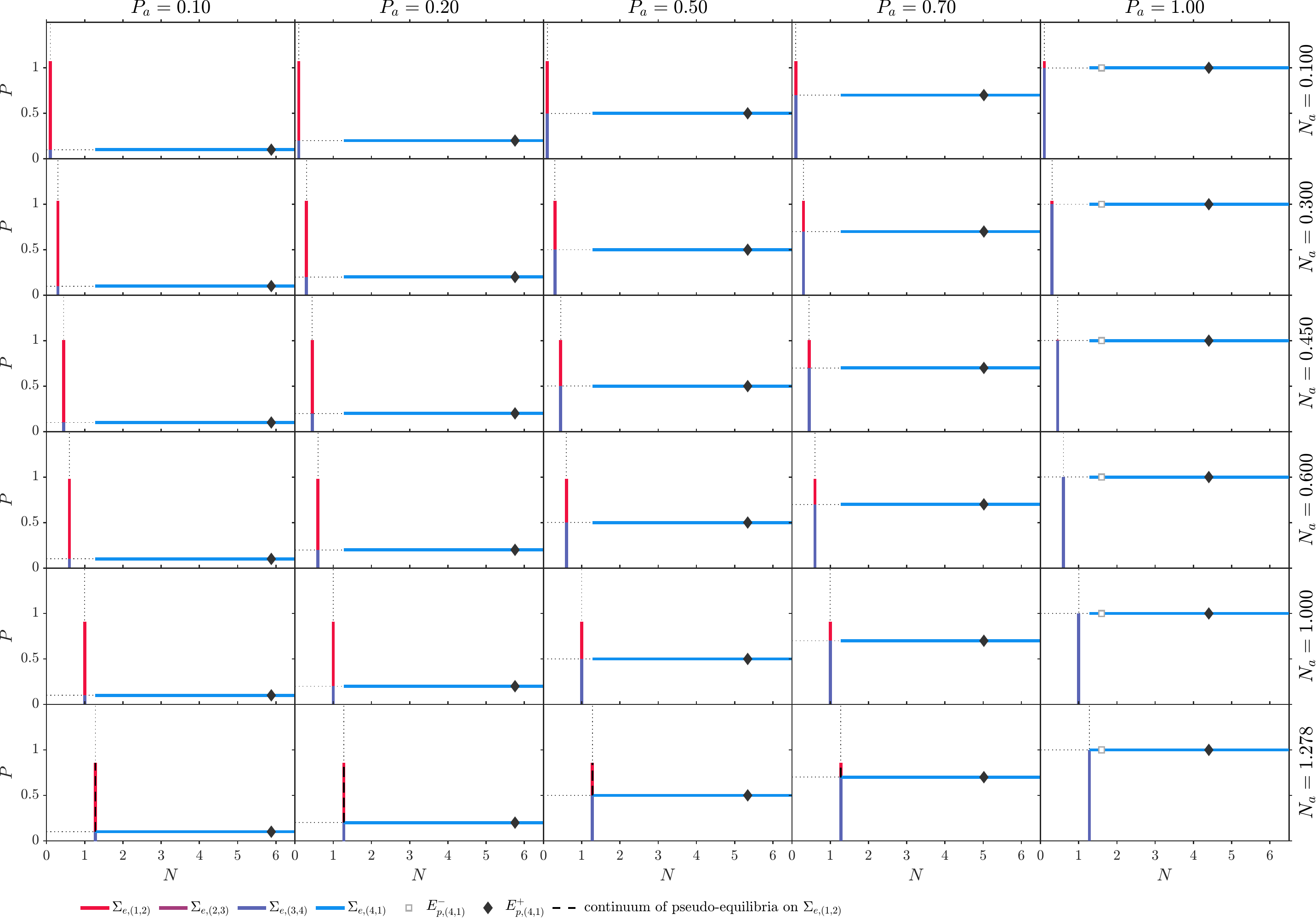}
    \caption{
    Escaping sliding segments and pseudo-equilibria of system~\eqref{rewritten_system}
    for different values of $N_a$ and $P_a$. The colored curves represent the escaping sliding segments $\Sigma_{e,(1,2)}$, $\Sigma_{e,(2,3)}$, $\Sigma_{e,(3,4)}$, and $\Sigma_{e,(4,1)}$. The markers denote the pseudo-equilibria $E_{p,(4,1)}^{-}$ and $E_{p,(4,1)}^{+}$ when they exist, while the dashed curve represents the continuum of pseudo-equilibria on $\Sigma_{e,(1,2)}$ when $N_a=m/(ba)$.}
    \label{fig:FigureS2}
\end{figure}

\newpage
\bibliography{sample}

\end{document}